\documentclass[11pt]{article}
\usepackage{amssymb}
\usepackage{amsfonts}
\usepackage{amssymb,latexsym}
\usepackage{amsmath}
\usepackage{graphics}
\usepackage{graphicx}
\usepackage{amsmath,amsfonts,amssymb,amsbsy,amsthm,latexsym,lscape}
\usepackage{url}
\def\oneandahalf{\par\baselineskip=18pt}

\def\E{{\rm E}}
	{\end{list}}
\newenvironment{hangref}{\begin{list}{}{\setlength{\itemsep}{0pt}%
			\setlength{\parsep}{0pt}\setlength{\leftmargin}{+\parindent}%
			\setlength{\itemindent}{-\parindent}}}{\end{list}}

\def\standard{\oddsidemargin=0in
	\evensidemargin=0in
	\topmargin =-.5in
	\textheight=9.0in
	\textwidth=6.5in}

\standard

\newtheorem{theorem}{Theorem}%[section]
\newtheorem{proposition}{Proposition}%[section]
\newtheorem{assumption}{Assumption}%[section]
\begin{document}

\normalsize

\title{
Mean-Variance Portfolio Selection: Statistical Properties and Distributionally Robust Optimization}
\author{
        Zhaolin Hu\\
        {\footnotesize
        School of Economics and
        Management, Tongji University, Shanghai 200092, China}\\
        {\footnotesize 
        russell@tongji.edu.cn}
}

\date{}

\maketitle

\begin{abstract}
We study Markowitz's mean-variance portfolio optimization problem. When practically using this model, the mean vector and the covariance matrix of the assets returns often need to be estimated from the sample data. The sample errors will be propagated to the optimization output. In this paper, we consider three commonly used mean-variance models and build the asymptotic properties for the conventional sample approximations that are widely adopted and studied, by leveraging the stochastic optimization theory. We show that for all three models, under certain conditions the sample approximations have the desired consistency and achieve a convergence rate of square root of sample size, and the asymptotic variance depends on the first four moments of the returns. To address the variability and the uncertainty in the data driven portfolio optimization, we further consider the distributionally robust optimization (DRO) for the mean-variance model. For the ambiguity sets constructed by the $\phi$-divergence and the Wasserstein distance, we build the tractable reformulations for the DRO models. We conduct numerical experiments to test the asymptotic properties for the estimation. We also conduct experiments to illustrate that the asymptotic normality might not hold when the fourth moments of the returns do not exist.
\end{abstract}

\oneandahalf

%\double

\section{Introduction}\label{sec:intro}

The mean-variance model is a classical portfolio selection model proposed by Markowitz (1952). Its basic idea is to use variance to model the portfolio risk and propose to make a tradeoff between return and risk. The idea opened a door for quantitative financial investment and risk management and Harry Markowitz was awarded the 1990 Nobel prize in economics for his portfolio theory. In this model, decision makers optimize one metric with constraint on the other metric, or optimize a weighted sum of the two metrics. When practically using this model, decision makers need to specify the mean vector and covariance matrix for the random returns/losses of the portfolio assets. However, in practice the parameters are rarely known exactly and often need to be estimated. By using the estimators to replace the true parameters one obtains a sample approximation for the true model and the sample errors may be propagated to that of the decisions. In this paper, we consider the sample errors when formulating and solving this model. We build asymptotic properties for variants of mean-variance models. 

While the mean-variance model is easy to solve, the performance of the optimal portfolio returned by the sample approximation typically admits a substantial variability. This issue has been concerned and studied extensively. A vast volume of studies investigated the estimation errors in finance. Britten-Jones (1999) studied the sampling errors in the mean-variance model. They derived inference procedures for the hypotheses about the portfolio weights. DeMiguel et al. (2009b) empirically evaluated the out-of-sample performance of the sample-based mean-variance model against the $1/N$ portfolio and found that the former could not consistently beat the latter over a number of empirical datasets. They showed that a large sample size is required for the sample-based mean-variance strategy to outperform the $1/N$ benchmark. DeMiguel et al. (2009a) and Brodie et al. (2009) studied using regularization to address the sample errors in the mean-variance models. Many of the studies targeted to derive some good estimators for the parameters to achieve better out-of-sample performances for the portfolio optimization model. 

The mean-variance model can be viewed as a representative of the mean-risk portfolio optimization models. Lim et al. (2011) studied a portfolio selection model where the risk is measured by the conditional value-at-risk, and showed that the sample solutions are fragile. Shapiro et al. (2014) used the stochastic optimization theory to build statistical properties for the estimation of various risk measures. Hu and Zhang (2018) studied the statistical properties for computing the utility-based shortfall risk. Wang et al. (2023) studied a mean-variance model that optimizes a weighted sum of the mean and the variance where the portfolio consists of derivative securities. They developed an estimator for the covariance matrix based on simulation and constructed an approximation problem for the mean-variance model. They built statistical properties for the approximation. However, their estimator for the covariance matrix may not be positive semi-definite. They projected the estimator to a positive definite matrix and further solved a new approximation problem. In this paper, we study the sample approximations of the mean-variance models where the mean vector and covariance matrix are estimated by the conventional empirical estimators. This kind of empirical approximations are widely adopted in the financial practice. We analyze the asymptotic properties of the sample approximations.  

The variability of the sample approximation is closely related to the perturbation analysis on the mean vector and the covariance matrix. Note, however, that the variance is not in an expectation expression. Actually, it takes a composite form as in the definition of the variance the expectation is embedded in the outer expectation. Directly conducting a perturbation analysis is not an easy task. In this paper, we treat the variance as an optimal value of an expectation and convert the mean-variance model to a stochastic program that only involves expectations. We then use stochastic optimization theory to build the statistical properties of the sample mean-variance model. Based on this approach, our analysis is applicable for various mean-variance models. We show that the sample approximations have the desired consistency and the convergence rate of the sample approximations is in the order of square root of the sample size. The asymptotic variance is affected by the first four moments of the random distribution of the assets. It suggests that while the mean-variance model only depends on the first two moments of the underlying randomness, the sample approximation performance (e.g., the variation) will be affected by the higher moments of the distribution. Furthermore, our analysis shows that adding regularization to the mean-variance model does not affect the asymptotic regimes. Besides building the theoretical results, our work also shows the powerfulness of the stochastic optimization theory via the analysis, which might provide some inspiration on using stochastic optimization to address other sampling-based optimization problems.

The variability of the sample approximation affects the practical applications of the mean-variance model. In practice, there are often limited data to achieve a high accuracy for the sample approximation. To address the uncertainty of the random distribution of the asset returns, the distributionally robust optimization (DRO) formulation has been adopted for the mean-variance portfolio optimization. In this paper, we also consider the DRO models, in which we study two classes of ambiguity sets modeled by the $\phi$-divergence and the Wasserstein distance. The two metrics are widely studied and used in the DRO literature, see, e.g., the comprehensive review in Kuhn et al. (2025). Blanchet et al. (2022) built an elegant reformulation for the mean-variance DRO model with the Wasserstein distance. They noted that the variance is nonlinear in the probability distribution and spent effort to address the nonlinearity and the worst-case variance. We show that treating the variance as the optimal value of an expectation could be adopted to overcome this obstacle and substantially simplify the analysis of the DRO model. For the $\phi$-divergence we use the conventional convex duality and reformulate the DRO problems as single layer optimization problems. For the Wasserstein distance, we consider a slightly more general DRO model than that of Blanchet et al. (2022) and show its equivalence to a single layer optimization problem. We show that the argument can be adopted to prove the reformulation built in Blanchet et al. (2022).                  

The rest of this paper is organized as follows. In Section \ref{sec:MV} we study a mean-variance model that optimizes the risk with constraint on the return, and build statistical properties for the sample approximation. In Section \ref{sec:MVA} we generalize the analysis for more variants of mean-variance models. In Section \ref{sec:DRO}, we analyze the mean-variance DRO models. We conduct some simulation experiments in Section \ref{sec:num} to test the theoretical findings.  Section \ref{sec:con} concludes the paper.

\section{Mean-Variance Model}\label{sec:MV}

Suppose an investor aims to invest on $k$
assets with random return vector $\xi=(\xi^1,\cdots,\xi^k)^{\rm T}$ where
$\xi^j$ is the return rate of asset $j$. Suppose $x_j$ is the capital
invested in asset $j$. Then the random return of the portfolio is
$H(x, \xi)=x^{\rm T}\xi$ where $x=(x_1,\cdots,x_k)^{\rm T}$. The mean-variance model proposed by Markowitz (1952) suggests to optimally balance the risk and return. There are alternative expressions for the mean-variance model. We first study the following formulation
\begin{eqnarray}\label{MVportfolio}
&\displaystyle\mathop{\mbox{minimize}}_{x\in X} & \mbox{V}\left[H(x, \xi)\right]\\
&\mbox{subject to} &  \E\left[H(x, \xi)\right]
\ge r,\nonumber 
\end{eqnarray}
where $\E$ and $\mbox{V}$ denote the expectation and variance of the random function, $r$ is a prespecified threshold and $X$ is the feasible set of $x$. Without loss of generality, we assume $X$ is a compact convex set throughout this paper. In Problem (\ref{MVportfolio}), the support of $\xi$, denoted as
$\Xi$, is a closed subset of $\Re^k$. Throughout the paper, it is implicitly assumed that $H(x, \xi)$ has finite first and second moments. Therefore, Problem (\ref{MVportfolio}) is well defined. There are other mean-variance variants of Problem (\ref{MVportfolio}). We defer the discussion in the next section. 

For the linear portfolio $H(x, \xi)=x^{\rm T}\xi$, Problem (\ref{MVportfolio}) is actually a quadratic program
\begin{eqnarray}\label{MVportfoliodet}
&\displaystyle\mathop{\mbox{minimize}}_{x\in X} & x^{\rm T}\Sigma x\\
&\mbox{subject to} & \mu^{\rm T} x \ge r, \nonumber \label{MVportfoliodetcons}
\end{eqnarray}
where $\mu$ and $\Sigma$ are the mean vector and covariance matrix of $\xi$. Throughout the paper, we assume that $\Sigma$ is a positive definite matrix. Therefore, the objective function in Problem (\ref{MVportfoliodet}) is strictly convex and Problem (\ref{MVportfoliodet}) has a unique optimal solution. With given parameters $\mu$ and $\Sigma$, the quadratic program is easy to solve. When practically formulating and solving the problem, however, the parameters $\mu$ and $\Sigma$ are not available directly but need to be estimated. Investors often use a sample from $\xi$ to estimate $\mu$ and $\Sigma$. Suppose we have $n$ independent and identically distributed (i.i.d.) observations of $\xi$, denoted as $\left\{\xi_1,\xi_2,\cdots,\xi_n\right\}$, which may be the real historical data or the sample simulated from the distribution of $\xi$. Let
\begin{equation}\label{equ:estimator}
\hat\mu=\frac{1}{n}\sum_{i=1}^n\xi_i,\  \hat\Sigma= \frac{1}{n} \sum_{i=1}^n\left(\xi_i-\hat\mu\right)\left(\xi_i-\hat\mu\right)^{\rm T},
\end{equation}
which are called sample mean and sample covariance matrix. Note that we use $\hat\Sigma$ as an estimator of $\Sigma$. In statistics, the estimator $\hat\Sigma_u=\frac{1}{n-1} \sum_{i=1}^n\left(\xi_i-\hat\mu\right)\left(\xi_i-\hat\mu\right)^{\rm T}$ is also frequently used, which is an unbiased estimator of $\Sigma$.

By using the sample estimators in (\ref{equ:estimator}) to replace $\mu$ and $\Sigma$ in Problem (\ref{MVportfoliodet}), we obtain the following sample approximation problem
\begin{eqnarray}\label{MVsampleportfolio}
&\displaystyle\mathop{\mbox{minimize}}_{x\in X} & x^{\rm T}\hat\Sigma x\\
&\mbox{subject to} & \hat\mu^{\rm T} x \ge r. \nonumber
\end{eqnarray}
Let $v_n$ and $v^*$ denote the optimal values of Problems (\ref{MVsampleportfolio}) and (\ref{MVportfoliodet}) respectively. A natural and central problem in this data driven approach is how well $v_n$ approximates $v^*$. Ban et al. (2018) built that the sample mean-variance model is fragile. They did numerical experiments to check the robustness of the sample based optimal values. Theoretically, they proved that the solution of Problem (\ref{MVsampleportfolio}) converges to that of Problem (\ref{MVportfoliodet}) in probability. We show that the solution converges with probability one (w.p.1). 

There are substantial difficulties for studying the perturbation of the mean vector and covariance matrix. This structure sets obstacles for the use of the stochastic optimization theory. In this paper, we propose to consider a stochastic program reformulation and analyze the statistical properties of the sample approximation. The consistency of the solution and the optimal value is relatively simple under this stochastic optimization perspective.

\subsection{Asymptotic Properties} 

In this paper, we aim to build the asymptotic properties based on the theory of stochastic optimization. This approach can be directly implemented to build the consistency. We show that the convergence rate is in the order of $n^{-1/2}$. We further derive the asymptotic variance. A critical observation is that Problem (\ref{MVportfolio}) can be converted equivalently to the following problem
\begin{eqnarray}\label{MVportfolioExp}
&\displaystyle\mathop{\mbox{minimize}}_{x\in X, t\in\Re} & \E\left[\left(H(x, \xi)-t\right)^2\right]\\
&\mbox{subject to} &  \E\left[H(x, \xi)\right]
\ge r,\label{MVportfolioExpconst}
\end{eqnarray}
and Problem (\ref{MVsampleportfolio}) can be converted equivalently to the following problem 
\begin{eqnarray}\label{portfolioExpsaa}
&\displaystyle\mathop{\mbox{minimize}}_{x\in X, t\in\Re} & \frac{1}{n}\sum_{i=1}^n\left(H(x,\xi_i)-t\right)^2 \\
&\mbox{subject to} &  \frac{1}{n}\sum_{i=1}^n H(x,\xi_i)
\ge r, \nonumber
\end{eqnarray}
 where $\left\{\xi_1,\xi_2,\cdots,\xi_n\right\}$ is the set of i.i.d. sample that is used in (\ref{equ:estimator}). We summarize the results in the following proposition.
\begin{proposition}\label{pro:mvequ}
(i) Problem (\ref{MVportfolioExp}) and Problem (\ref{MVportfolio}) are equivalent, in the sense that $x$ is an optimal solution of Problem
(\ref{MVportfolio}) if and only if there exists $t$ such that
$(x,t)$ is an optimal solution of Problem (\ref{MVportfolioExp}), and both problems have the same optimal value $v^*$. 

(ii) Problem (\ref{portfolioExpsaa}) and Problem (\ref{MVsampleportfolio}) are equivalent, in the sense that $x$ is an optimal solution of Problem
(\ref{MVsampleportfolio}) if and only if there exists $t$ such that
$(x,t)$ is an optimal solution of Problem (\ref{portfolioExpsaa}), and both problems have the same optimal value $v_n$.
\end{proposition}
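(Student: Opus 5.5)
The plan is to use the elementary identity that variance is the minimal mean-squared deviation from a constant. For any random variable $Y$ with finite second moment and any $t\in\Re$,
\[
\E\left[(Y-t)^2\right]=\mbox{V}[Y]+\left(\E[Y]-t\right)^2 .
\]
This is a strictly convex quadratic in $t$, so $\min_{t\in\Re}\E[(Y-t)^2]=\mbox{V}[Y]$, and the minimizer is unique, namely $t=\E[Y]$. Applying this with $Y=H(x,\xi)$ for each fixed $x\in X$ gives
\[
\min_{t}\E\left[\left(H(x,\xi)-t\right)^2\right]=\mbox{V}[H(x,\xi)]=x^{\rm T}\Sigma x,
\]
attained only at $t=\mu^{\rm T}x$. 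The constraint of Problem (\ref{MVportfolioExp}) does not involve $t$, so the feasible set of $(x,t)$ is simply $\{x\in X:\mu^{\rm T}x\ge r\}\times\Re$. Hence Problem (\ref{MVportfolioExp}) is the two-stage minimization $\inf_{x}\inf_t$, whose inner value is the objective of Problem (\ref{MVportfolio}).

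From this the equivalence follows in a standard way, and I will prove both directions explicitly.

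\emph{Optimal $x$ gives optimal $(x,t)$.} Let $x^*$ be optimal for (\ref{MVportfolio}) and set $t^*=\mu^{\rm T}x^*$. For any feasible $(x,t)$,
\[
\E\left[(H(x,\xi)-t)^2\right]\ge \mbox{V}[H(x,\xi)]\ge \mbox{V}[H(x^*,\xi)]=\E\left[(H(x^*,\xi)-t^*)^2\right].
\]
So $(x^*,t^*)$ is optimal and the two optimal values coincide, both equal to $v^*$.

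\emph{Optimal $(x,t)$ gives optimal $x$.} Conversely, let $(\bar x,\bar t)$ be optimal for (\ref{MVportfolioExp}). For any feasible $x$,
\[
\mbox{V}[H(\bar x,\xi)]\le \E\left[(H(\bar x,\xi)-\bar t)^2\right]\le \E\left[(H(x,\xi)-\mu^{\rm T}x)^2\right]=\mbox{V}[H(x,\xi)],
\]
so $\bar x$ is optimal for (\ref{MVportfolio}). Chaining the inequalities also shows $\bar t=\mu^{\rm T}\bar x$.

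\emph{Existence and the sample version.} An optimum of (\ref{MVportfolio}) exists whenever the problem is feasible, because $X$ is compact and the objective is continuous. If (\ref{MVportfolio}) is infeasible, so is (\ref{MVportfolioExp}), and both values are $+\infty$. Part (ii) is the same argument applied to the empirical distribution $P_n$ placing mass $1/n$ on each $\xi_i$. Under $P_n$ the mean of $H(x,\xi)$ is $\hat\mu^{\rm T}x$ and its variance is
\[
\frac1n\sum_i\left(x^{\rm T}\xi_i-\hat\mu^{\rm T}x\right)^2=x^{\rm T}\hat\Sigma x,
\]
which holds precisely because $\hat\Sigma$ uses the $1/n$ normalization. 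The identity $\frac1n\sum_i(H(x,\xi_i)-t)^2=x^{\rm T}\hat\Sigma x+(\hat\mu^{\rm T}x-t)^2$ then carries the proof over verbatim.

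There is no real obstacle here. The only points needing care are handling possible infeasibility or nonexistence of minimizers, and checking that the $1/n$ normalization of $\hat\Sigma$ matches the empirical variance; with the unbiased $\hat\Sigma_u$ the equivalence would hold only up to the factor $n/(n-1)$.
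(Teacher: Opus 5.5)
Your proof is correct and takes essentially the same route as the paper, which also rests on the fact that $\E[(H(x,\xi)-t)^2]$ is a strictly convex quadratic in $t$, uniquely minimized at $t=\E[H(x,\xi)]$ with minimum value $\mbox{V}[H(x,\xi)]$, and then repeats the argument under the empirical distribution for part (ii). Your version makes explicit what the paper leaves implicit: the two directions of the equivalence (including $\bar t=\mu^{\rm T}\bar x$), the infeasible case, and the check that the $1/n$ normalization of $\hat\Sigma$ is exactly what makes the empirical variance equal $x^{\rm T}\hat\Sigma x$.
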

\noindent\textbf{Proof.} (i) The equivalence is built on the fact that
\begin{equation}\label{varsp}
\mbox{V}\left[H(x,\xi)\right]=\inf_{t\in \Re}\ \E\left[\left(H(x,\xi)-t\right)^2\right].
\end{equation}
To see this, consider the objective function for the infimum in (\ref{varsp}), which can be expressed as
\begin{equation}
\E\left[\left(H(x, \xi)-t\right)^2\right]=t^2-2\E\left[H(x,\xi)\right]t+\E\left[\left(H(x,\xi)\right)^2\right]. \nonumber
\end{equation}
It is a strictly convex function and has a unique minimizer $t^*(x)=\E\left[H(x,\xi)\right]$. Plugging $t^*(x)$ into the objective function yields that the infimum is the variance of $H(x,\xi)$. Integrating the minimization and infimum we immediately obtain the equivalence.

(ii) The equivalence can be built based on the argument for (i) by replacing the distribution with its empirical distribution.  
\hfill$\Box$

The perspective of treating the variance and the mean of a random variable as the optimal value and the optimal solution of an expectation function is dated back to early analysis, e.g., Huber (1964). Proposition \ref{pro:mvequ} suggests that analyzing the approximation of Problem  (\ref{MVsampleportfolio}) to Problem (\ref{MVportfoliodet}) is equivalent to analyzing the approximation of Problem (\ref{portfolioExpsaa}) to Problem (\ref{MVportfolioExp}). Problem (\ref{MVportfolioExp}) is a standard stochastic optimization problem. The more interesting fact is that Problem (\ref{portfolioExpsaa}) is exactly the sample average approximation (SAA) of Problem (\ref{MVportfolioExp}). This bridge enables us to use the stochastic optimization theory to build the statistical properties of the sample mean-variance approximation. 

When $H(x,\xi)$ is a linear function, which is the focus of this paper, we can see that the objective function in Problem (\ref{MVportfolioExp}) is jointly convex in $(x,t)$. For the more general nonlinear function, the convexity may not be guaranteed even when $H(x,\xi)$ is convex in $x$. A partial reason is that the quadratic function is not monotone. However, for the general function, even with the absence of convexity, the reformulation allows us to solve the mean-variance model and conduct analysis by using stochastic optimization techniques. In this paper, we focus on the statistical properties for the linear portfolio. 

Note that $\Sigma$ is positive definite. Problem (\ref{MVportfoliodet}) has a unique optimal solution. Moreover, $\E\left[\left(H(x, \xi)-t\right)^2\right]$ is strictly convex in $t$. Therefore, Problem (\ref{MVportfolioExp}) has a unique optimal solution. We denote it as $\left(x^*,t^*\right)$. 
We make the following assumptions.
\begin{assumption}\label{assu:Slater}
There exists a point $x\in X$ such that $\E\left[H(x, \xi)\right]> r$.
\end{assumption}
Assumption \ref{assu:Slater} is a Slater condition, which is a standard constraint qualification in optimization.

\begin{assumption}\label{assu:lip}
There exists a random function $M(\xi)$ with
$\E\left[M(\xi)\right]<\infty$ such that
\[|H(x,\xi)|\leq M(\xi), \forall\ x\in X,\ a.e. \ \xi\in \Xi.\]
Furthermore, there exists a random function $K(\xi)$ with
$\E\left[K(\xi)^2\right]<\infty$ such that
\[\left|H(x_1,\xi)-H(x_2,\xi)\right|\le K(\xi)\
\|x_1-x_2\|, \forall\ x_1,x_2\in X,\ a.e.\ \xi\in \Xi.\]
\end{assumption}
Assumption
\ref{assu:lip} requires that $H(x,\xi)$ is dominated by an
integrable random variable and satisfies the Lipschitz condition. It
is a standard assumption in stochastic optimization literature, see,
for instance, Shapiro et al. (2014). It is also a critical assumption for the
well-definedness and the differentiability of the expectations that
are considered below.

In both Problem (\ref{MVportfolioExp}) and Problem (\ref{portfolioExpsaa}), $t\in\Re$. In this paper, we assume that we can specify a compact interval $T$ such that for the optimal solution $(x,t)$ of Problem  (\ref{MVportfolioExp}), $t$ is an interior point of $T$. To analyze the asymptotic properties, we replace $t\in\Re$ with $t\in T$ in Problem (\ref{MVportfolioExp}) and Problem (\ref{portfolioExpsaa}). We also make a parallel assumption on $\left(H(x, \xi)-t\right)^2$.
\begin{assumption}\label{assu:lip:variance}
There exists a random function $\tilde M(\xi)$ with
$\E\left[\tilde M(\xi)\right]<\infty$ such that
\[|\left(H(x,\xi)-t\right)^2|\leq \tilde M(\xi), \forall\ (x,t)\in X\times T,\ a.e.\ \xi\in \Xi.\]
Furthermore, there exists a random function $\tilde K(\xi)$ with
$\E\left[\tilde K(\xi)^2\right]<\infty$ such that
\[\left|\left(H(x_1,\xi)-t_1\right)^2-\left(H(x_2,\xi)-t_2\right)^2\right|\le \tilde K(\xi)\
\|(x_1,t_1)-(x_2,t_2)\|, \forall\ (x_1,t_1), (x_2,t_2)\in X\times T, \ a.e. \ \xi\in \Xi.\]
\end{assumption}
Recall that $\left(x^*,t^*\right)$ is the unique optimal solution of Problem (\ref{MVportfolioExp}). We write the constraint (\ref{MVportfolioExpconst}) as $\E\left[-H(x, \xi)+r\right]
\le 0$ and associate the constraint with a Lagrangian multiplier $\lambda$. Let $\Lambda$ denote the set of optimal multipliers associated with $\left(x^*,t^*\right)$. Let $\left(x_n, t_n\right)$ denote an optimal solution of Problem (\ref{portfolioExpsaa}). Define the random variable 
\begin{equation}
Y(x,t,\lambda)= \lim_{n\to \infty} \sqrt{n}\left[\frac{1}{n}\sum_{i=1}^n\left[\left(H(x,\xi_i)-t\right)^2-\lambda H(x,\xi_i)\right]-\E\left[\left(H(x,\xi)-t\right)^2-\lambda H(x,\xi)\right]\right]. \nonumber
\end{equation}
By the central limit theorem (CLT), $Y(x,t,\lambda)$ has a normal distribution $N(0,\sigma^2(x,t,\lambda))$ with $\sigma^2(x,t,\lambda)=\mbox{V}\left[\left(H(x,\xi)-t\right)^2-\lambda H(x,\xi)\right]$. We build the following theorem. 
\begin{theorem}\label{thm:asyoptval}
Suppose that Assumptions \ref{assu:Slater} and \ref{assu:lip} are satisfied. Then $v_n \to v^*$ with probability one (w.p.1), and $(x_n,t_n)\to (x^*,t^*)$ w.p.1. Suppose that Assumptions \ref{assu:Slater}, \ref{assu:lip} and \ref{assu:lip:variance} are satisfied. Then 
\begin{equation*}
\sqrt{n} \left(v_n-v^*\right) \Rightarrow \sup_{\lambda\in \Lambda}\ \left[Y(x^*,t^*, \lambda)\right],   
\end{equation*}
where ``$\Rightarrow$'' denotes the convergence in distribution. If, moreover, $\Lambda=\left\{\lambda^*\right\}$ is a singleton, then
\begin{equation*}
\sqrt{n} \left(v_n-v^*\right) \Rightarrow N(0,\sigma^2),
\end{equation*}
where
\begin{equation}\label{CLTVvmin}
\sigma^2=\mbox{V}\left[\left(H(x^*,\xi)-\E\left[H(x^*,\xi)\right]\right)^2-\lambda^* H(x^*,\xi)\right].
\end{equation}
\end{theorem}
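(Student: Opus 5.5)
By Proposition \ref{pro:mvequ}, $v_n$ and $v^*$ are the optimal values of Problems (\ref{portfolioExpsaa}) and (\ref{MVportfolioExp}). After restricting $t$ to the compact interval $T$, Problem (\ref{portfolioExpsaa}) is exactly the SAA of the expectation-constrained stochastic program (\ref{MVportfolioExp}) over the compact convex set $X\times T$. The plan is therefore to invoke the standard consistency and asymptotic-distribution results for SAA with expected-value constraints (Shapiro et al., 2014, Theorems 5.5 and 5.11/5.12). The remaining work is checking their hypotheses.

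\emph{Consistency.} Write $f(x,t)=\E[(H(x,\xi)-t)^2]$ and $g(x)=\E[r-H(x,\xi)]$, with sample versions $\hat f_n$ and $\hat g_n$.

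1. Show that $\hat f_n\to f$ and $\hat g_n\to g$ uniformly on $X\times T$ w.p.1. For $g$ this follows from the uniform law of large numbers, using continuity of $H$ in $x$ (Lipschitz by Assumption \ref{assu:lip}) and domination by $M(\xi)$. For $f$, in the linear case we can avoid Assumption \ref{assu:lip:variance}: expand $\hat f_n(x,t)=x^{\rm T}\hat S x-2t\hat\mu^{\rm T}x+t^2$, where $\hat S=\frac1n\sum_i\xi_i\xi_i^{\rm T}$. The strong law gives $\hat S\to\E[\xi\xi^{\rm T}]$ and $\hat\mu\to\mu$ w.p.1, which yields uniform convergence on compact sets; only finite second moments are needed, and these are assumed throughout.

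2. Show that the feasible sets converge. Convexity of $g$ together with the Slater point of Assumption \ref{assu:Slater} implies that every feasible $x$ is a limit of strictly feasible points. Strictly feasible points are eventually feasible for the SAA, and limits of SAA-feasible points are feasible for the true problem.

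3. Combine 1 and 2 with uniqueness of $(x^*,t^*)$, which holds because $\Sigma\succ0$ and the objective is strictly convex in $t$. The consistency theorem then gives $v_n\to v^*$ and $(x_n,t_n)\to(x^*,t^*)$ w.p.1.

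One caveat: restricting $t\in T$ must not change the SAA optimum. Since $t^*$ is interior to $T$ and $t_n=\hat\mu^{\rm T}x_n\to t^*$, the restriction is eventually inactive.

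\emph{Asymptotics.}

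4. Use Lagrangian duality. Because the problem is convex and satisfies Slater's condition, $v^*=\min_{(x,t)}\max_{\lambda\ge0}L(x,t,\lambda)$, where $L(x,t,\lambda)=f(x,t)+\lambda g(x)$, and the multiplier set $\Lambda$ is nonempty and compact. The SAA has the analogous representation.

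5. Apply the functional CLT for the vector $\big(\sqrt n(\hat f_n-f),\sqrt n(\hat g_n-g)\big)$ in $C(X\times T)$. Its hypotheses are supplied by Assumptions \ref{assu:lip} and \ref{assu:lip:variance}: the Lipschitz moduli $K$ and $\tilde K$ are square integrable, and there is a point with finite variance.

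6. Apply the Delta theorem to the min-max value functional, which is Hadamard directionally differentiable with derivative $\min_{(x,t)\in S}\sup_{\lambda\in\Lambda}[\delta_f(x,t)+\lambda\delta_g(x)]$, where $S$ is the set of optimal $(x,t)$ (Shapiro et al., 2014, Thm 5.11). Since $S=\{(x^*,t^*)\}$, this gives $\sup_{\lambda\in\Lambda}Y(x^*,t^*,\lambda)$. The constant $r$ drops out, and the sign convention for $-\lambda H$ matches $Y$. I expect this step to be the main obstacle, in particular verifying the Hadamard directional differentiability of the optimal value with respect to perturbations of the constraint function. I would quote the theorem rather than reprove it.

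7. If $\Lambda=\{\lambda^*\}$, the limit is $Y(x^*,t^*,\lambda^*)\sim N(0,\sigma^2(x^*,t^*,\lambda^*))$. Using $t^*=\E[H(x^*,\xi)]$ from the proof of Proposition \ref{pro:mvequ} gives exactly (\ref{CLTVvmin}).
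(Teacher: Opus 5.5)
Your proposal follows essentially the same route as the paper. Via Proposition~\ref{pro:mvequ} you treat (\ref{portfolioExpsaa}) on $X\times T$ as the SAA of (\ref{MVportfolioExp}), obtain consistency from Theorem~5.5 of Shapiro et al.\ (2014) through uniform convergence and the Slater-based conditions (a)--(b), and obtain the limit law from Theorem~5.11 at the unique solution with $t^*=\E[H(x^*,\xi)]$. Your explicit expansion $x^{\rm T}\hat S x-2t\hat\mu^{\rm T}x+t^2$, used instead of the paper's convexity-based uniform LLN, and your check that the restriction $t\in T$ is eventually inactive, are sound refinements that the paper omits.

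One caution about step~5, which the paper's proof shares. Assumption~\ref{assu:lip:variance} only makes $\tilde M$ integrable, so it does not by itself supply a point with $\E[(H(x,\xi)-t)^4]<\infty$, which Theorem~5.11 needs. This does follow from $\E[\tilde K(\xi)^2]<\infty$ when $X$ has nonempty interior. It fails, for example, on the hyperplane $\mathbf{1}^{\rm T}x=1$ with $\xi=\zeta\mathbf{1}+\epsilon$, where $\epsilon$ is bounded and $\E[\zeta^2]<\infty=\E[\zeta^4]$. So your claim in step~5 that Assumptions~\ref{assu:lip} and~\ref{assu:lip:variance} provide a point of finite variance should be replaced by an explicit fourth-moment hypothesis, as the paper itself does with Assumption~\ref{ass:finite} for the weighted model.
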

\noindent\textbf{Proof.} As discussed we can impose a compact interval $T$ and use $t\in T$ to replace $t\in\Re$, and consider the compact set $X\times T$. For the i.i.d. sample, the pointwise law of large numbers holds for $\left(H(x,\xi)-t\right)^2$ and $H(x,\xi)$. Because $\left(H(x,\xi)-t\right)^2$ and $H(x,\xi)$ are convex in $(x,t)$, by Theorem 7.55 of Shapiro et al. (2014), the uniform law of large numbers in $X\times T$ also holds for them. Then, the assumptions in Theorem 5.3 of Shapiro et al. (2014) are satisfied. Following the analysis on Page 181 of Shapiro et al. (2014), condition (a) in Theorem 5.5 of Shapiro et al. (2014) holds. Because Assumption \ref{assu:Slater} is satisfied, following the analysis on Page 182 of Shapiro et al. (2014), condition (b) in Theorem 5.5 of Shapiro et al. (2014) also holds. It follows from Theorem 5.5 of Shapiro et al. (2014) that $v_n \to v^*$ w.p.1, and $(x_n,t_n)\to (x^*,t^*)$ w.p.1.

Note that the set of optimal solutions $S=\left\{(x^*,t^*)\right\}$ is a singleton, and $t^*=\E\left[H(x^*,\xi)\right]$. Suppose that Assumptions \ref{assu:Slater}, \ref{assu:lip} and \ref{assu:lip:variance} are satisfied. It follows from Theorem 5.11 of Shapiro et al. (2014) that the remain asymptotic results hold. 
This concludes the proof of the theorem.
\hfill$\Box$

An important observation from the theorem is that the variability of the sample mean-variance model depends on the first four moments of the random returns $\xi$. When the distribution of $\xi$ is sufficiently light-tailed, Assumption \ref{assu:lip:variance} typically holds. It can ensure the existence of the asymptotic variance $\sigma^2$ in (\ref{CLTVvmin}). However, when the distribution tail of $\xi$ becomes heavier, Assumption \ref{assu:lip:variance} may be violated and $\sigma^2$ may blow up. In the numerical experiments, we will conduct SAA for the $t$-distribution.

The optimal solution of Problem (\ref{MVportfolio}) depends on $X$. In practice, the decision makers may impose different constraints on the portfolio weights. For some $X$, it is possible to derive analytical solution for Problem (\ref{MVportfolio}). We recall some classical results of the mean-variance model. Consider $X=\left\{x: {\bf 1}^{\rm T} x= 1\right\}$ where ${\bf 1}$ is the all one vector. Note that this set is not compact. But we can impose very loose bounds on $x$ to make the set compact. We consider solving Problem (\ref{MVportfoliodet}) based on the Lagrangian approach. Let $\lambda$ and $\nu$ denote the Lagrangian multipliers associated with the constraints $\mu^{\rm T} x \ge r$ and ${\bf 1}^{\rm T} x= 1$, respectively. Let $\lambda^*$ and $\nu^*$ denote the optimal multipliers corresponding to the optimal solution $x^*$. Consider the following two cases. The results can be found in Burke (2020).   

Case 1: $\mu^{\rm T} x^* = r$. In this case, we exclude the setting where $\mu$ and ${\bf 1}$ are linear dependent. Consider that $\mu$ and ${\bf 1}$ are linear independent. Then it can verified that 
\begin{equation}
\delta:=\left(\mu^{\rm T}\Sigma^{-1} \mu\right) \left({\bf 1}^{\rm T}\Sigma^{-1} {\bf 1}\right)-\left(\mu^{\rm T}\Sigma^{-1} {\bf 1}\right)^2>0. \nonumber
\end{equation}
The optimal solution is 
\begin{equation*}
x^*=\left(1-\alpha\right)\frac{\Sigma^{-1} {\bf 1}}{{\bf 1}^{\rm T}\Sigma^{-1} {\bf 1}}+ \alpha \frac{\Sigma^{-1} \mu}{{\bf 1}^{\rm T}\Sigma^{-1} \mu},
\end{equation*}
where 
\begin{equation*}
\alpha=\delta^{-1}\left[ r\left(\mu^{\rm T}\Sigma^{-1} {\bf 1}\right) \left({\bf 1}^{\rm T}\Sigma^{-1} {\bf 1}\right)-\left(\mu^{\rm T}\Sigma^{-1} {\bf 1}\right)^2\right],
\end{equation*}
and the corresponding multipliers are 
\begin{equation*}
\lambda^*=\delta^{-1} {\bf 1}^{\rm T}\Sigma^{-1}\left(r{\bf 1}-\mu\right), \quad \nu^*=- \delta^{-1} \mu^{\rm T}\Sigma^{-1}\left(r{\bf 1}-\mu\right).
\end{equation*}
For this case, we can obtain that the Lagrangian multiplier is unique. Thus, the asymptotic normality in Theorem \ref{thm:asyoptval} holds.  

Case 2: $\mu^{\rm T} x^* > r$. In this case, $\lambda^*=0$, $\nu^*=1/\left({\bf 1}^{\rm T}\Sigma^{-1} {\bf 1}\right)$, $x^*=\nu^*\Sigma^{-1} {\bf 1}$. Then the uniqueness of the Lagrangian multiplier is also guaranteed and the asymptotic normality holds. Especially, we have that the asymptotic variance in (\ref{CLTVvmin}) becomes 
$\sigma^2=\mbox{V}\left[\left(H(x^*,\xi)-\E\left[H(x^*,\xi)\right]\right)^2\right]$ 
due to $\lambda^*=0$.

\subsection{Setting Extensions}

We now discuss more about the asymptotic properties when the modeling settings are slightly different. We first consider an alternative estimator for the covariance matrix. In the sample approximation discussed above, we used the biased estimator $\hat\Sigma$. This estimator exactly matches the SAA formulation. In statistics, the unbiased estimator $\hat\Sigma_u$ provided above may sometimes be preferred. If we use the unbiased estimator $\hat\Sigma_u$ and obtain the following approximation 
\begin{eqnarray}\label{MVsampleportfolioub}
&\displaystyle\mathop{\mbox{minimize}}_{x\in X} & x^{\rm T}\hat\Sigma_u x\\
&\mbox{subject to} & \hat\mu^{\rm T} x \ge r, \nonumber
\end{eqnarray}
we can still derive the asymptotic properties for the optimal value of the approximation. To see this, note that the optimal value of Problem (\ref{MVsampleportfolioub}) is $\frac{n}{n-1}v_n$ where $v_n$ is the optimal value of Problem (\ref{MVsampleportfolio}). It can be verified that $\sqrt{n} \left(\frac{n}{n-1}v_n-v^*\right)$ has the same asymptotic distribution as $\sqrt{n} \left(v_n-v^*\right)$. 

Next we consider the regularization in the mean-variance model. Because there exist sample errors, the performance of the sample optimal portfolio may not be satisfactory. Many studies proposed to impose some regularization for the portfolio weights to enhance the out-of-sample performance for the mean-variance model. A first class of models with regularization takes the following form   
\begin{eqnarray}\label{MVportdetregularobj}
&\displaystyle\mathop{\mbox{minimize}}_{x\in X} & x^{\rm T}\Sigma x+\rho(x)\\
&\mbox{subject to} & \mu^{\rm T} x \ge r. \nonumber \label{portdetregularobjcons}
\end{eqnarray}
In Problem (\ref{MVportdetregularobj}), $\rho(x)$ is a convex function of $x$. It is often specified as the $L_1$ or $L_2$ norm of $x$, see, e.g., Brodie et al. (2009). A second type of regularization proposes to add constraints for the weights and consider the following problem   
\begin{eqnarray}\label{MVportdetregularcon}
&\displaystyle\mathop{\mbox{minimize}}_{x\in X} & x^{\rm T}\Sigma x\\
&\mbox{subject to} & \mu^{\rm T} x \ge r, \ \rho(x)\le \rho_0, \nonumber \label{portdetregularobjcons}
\end{eqnarray}
where $\rho(x)$ is also a convex function. For instance, DeMiguel et al. (2009a) imposed norm constraint on $x$ to build the portfolio optimization model. By using the stochastic optimization approach discussed above, we can show that the consistency and the asymptotic normality hold for the sample approximations of Problems (\ref{MVportdetregularobj}) and (\ref{MVportdetregularcon}). The asymptotic variance has the same expression as (\ref{CLTVvmin}). The only difference is the optimal solution $x^*$. This observation suggests that imposing regularization on the portfolio weights will not affect the convergence rate of the sample approximation, but it may affect the optimal solution and thus the asymptotic variance.

\section{Alternative Mean-Variance Models}\label{sec:MVA}

There are alternative expressions for the mean-variance model. We now consider a formulation that optimizes the weighted sum of the mean and the variance
\begin{eqnarray}\label{MVWeightport}
&\displaystyle\mathop{\mbox{minimize}}_{x\in X} & \mbox{V}\left[H(x, \xi)\right]-w\E\left[H(x, \xi)\right].
\end{eqnarray}
In Problem (\ref{MVWeightport}), $w\ge 0$ is a prespecified parameter that adjusts the weights of the return and the risk. When $w=0$, we obtain a minimum-variance problem, which is widely studied in the literature, see, e.g., Jagannathan and Ma (2003). For the linear portfolio, the problem can be rewritten as 
\begin{eqnarray}\label{MVWeightportdet}
&\displaystyle\mathop{\mbox{minimize}}_{x\in X} & x^{\rm T}\Sigma x-w\mu^{\rm T} x.
\end{eqnarray}
This model is relatively easier to analyze because it does not involve the constraint. By plugging the estimators in (\ref{equ:estimator}) into Problem (\ref{MVWeightportdet}), we can obtain the following sample approximation problem  
\begin{eqnarray}\label{MVWeightportsample}
&\displaystyle\mathop{\mbox{minimize}}_{x\in X} & x^{\rm T}\hat \Sigma x-w\hat\mu^{\rm T} x.
\end{eqnarray}
Let $v_n$ and $v^*$ denote the optimal values of Problems (\ref{MVWeightportsample}) and (\ref{MVWeightportdet}) respectively. We analyze the statistical properties of the sample approximation. Similarly as in the previous section, Problem (\ref{MVWeightportdet}) is equivalent to the following stochastic  optimization problem 
\begin{eqnarray}\label{MVWeightportExp}
&\displaystyle\mathop{\mbox{minimize}}_{x\in X, t\in\Re} & \E\left[\left(H(x, \xi)-t\right)^2-w H(x, \xi)\right].
\end{eqnarray}
Problem (\ref{MVWeightportsample}) is equivalent to the following SAA of Problem (\ref{MVWeightportExp}) 
\begin{eqnarray}\label{MVWeightportsaa}
&\displaystyle\mathop{\mbox{minimize}}_{x\in X, t\in\Re} & \frac{1}{n}\sum_{i=1}^n\left[\left(H(x, \xi_i)-t\right)^2-w H(x, \xi_i)\right].
\end{eqnarray}
Similarly, we specify a compact interval $T$ and use $t\in T$ to replace $t\in \Re$. Problem (\ref{MVWeightportExp}) has a unique optimal solution $(x^*,t^*)$. Let $\left(x_n, t_n\right)$ denote an optimal solution of Problem (\ref{MVWeightportsaa}). We make the following assumptions. 
\begin{assumption}\label{ass:finite}
For some $(x,t) \in X\times T$, $\E\left[\left[\left(H(x, \xi)-t\right)^2-w H(x, \xi)\right]^2\right]<+\infty$. 
\end{assumption}

\begin{assumption}\label{ass:Lipweight}
There exists a random function $\tilde K(\xi)$ with
$\E\left[\tilde K(\xi)^2\right]<\infty$ such that
\[\left|\left[\left(H(x_1,\xi)-t_1\right)^2-w H(x_1, \xi)\right]-\left[\left(H(x_2,\xi)-t_2\right)^2-w H(x_2, \xi)\right]\right|\le \tilde K(\xi)\
\|(x_1,t_1)-(x_2,t_2)\|, \] 
$\forall\ (x_1,t_1), (x_2,t_2)\in X\times T,\ a.e. \xi\in \Xi$.
\end{assumption}
Then we have the following proposition. 
\begin{proposition}
Suppose that Assumptions \ref{ass:finite} and \ref{ass:Lipweight} are satisfied. Then $v_n \to v^*$ w.p.1, $(x_n,t_n)\to (x^*,t^*)$ w.p.1, and 
\begin{equation*}
\sqrt{n} \left(v_n-v^*\right) \Rightarrow N(0,\sigma^2), 
\end{equation*}
where
\begin{equation}\label{CLTVwvmmin}
\sigma^2=\mbox{V}\left[\left(H(x^*,\xi)-\E\left[H(x^*,\xi)\right]\right)^2-w H(x^*,\xi)\right].
\end{equation}

\end{proposition}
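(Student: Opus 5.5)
We follow the route of Theorem \ref{thm:asyoptval}, now simplified by the absence of the expectation constraint. First, as in Proposition \ref{pro:mvequ}, we use identity (\ref{varsp}) to show that Problem (\ref{MVWeightportdet}) is equivalent to Problem (\ref{MVWeightportExp}). The analogous identity under the empirical distribution shows that Problem (\ref{MVWeightportsample}) is equivalent to the SAA Problem (\ref{MVWeightportsaa}). Both equivalences hold with equal optimal values. We then restrict $t$ to the compact interval $T$, chosen so that $t^*=\E[H(x^*,\xi)]$ is an interior point. Since the empirical minimizer $t_n=\frac{1}{n}\sum_i H(x_n,\xi_i)$ stays bounded near $t^*$ w.p.1 for large $n$, this restriction does not change $v_n$ eventually. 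Write $F(x,t,\xi)=(H(x,\xi)-t)^2-wH(x,\xi)$ and $f(x,t)=\E[F(x,t,\xi)]$.

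\textbf{Consistency.} Assumption \ref{ass:Lipweight} makes $F(\cdot,\cdot,\xi)$ Lipschitz on the compact set $X\times T$ with modulus $\tilde K(\xi)$. Assumption \ref{ass:finite} gives integrability of $F$ at one point. Together they yield the bound $|F(x,t,\xi)|\le |F(\bar x,\bar t,\xi)|+\tilde K(\xi)D$, where $D$ is the diameter of $X\times T$, so $F$ has an integrable envelope. Alternatively, since $H$ is linear, $F$ is convex in $(x,t)$ and Theorem 7.55 of Shapiro et al. (2014) applies, as in the proof of Theorem \ref{thm:asyoptval}. Either way, the uniform law of large numbers holds and $f$ is continuous. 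The feasible set $X\times T$ is deterministic and compact, and $f$ has the unique minimizer $(x^*,t^*)$, unique because $\Sigma\succ 0$ gives strict convexity in $x$ and the objective is strictly convex in $t$. Theorem 5.3 of Shapiro et al. (2014) then gives $v_n\to v^*$ and $(x_n,t_n)\to(x^*,t^*)$ w.p.1.

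\textbf{Asymptotic normality.} We apply Theorem 5.7 of Shapiro et al. (2014), the unconstrained counterpart of Theorem 5.11 used earlier. Its hypotheses are the following:
\begin{itemize}
\item a finite second moment of $F$ at some point, which is Assumption \ref{ass:finite};
\item a Lipschitz modulus with finite second moment, which is Assumption \ref{ass:Lipweight}.
\end{itemize}
These yield a functional CLT in $C(X\times T)$ and, via the Delta method for the min-value functional, the limit $\sqrt{n}(v_n-v^*)\Rightarrow \inf_{(x,t)\in S} Y(x,t)$. Here $S$ is the optimal set and $Y$ is the Gaussian limit of $\sqrt n(\hat f_n-f)$. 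Because $S=\{(x^*,t^*)\}$ is a singleton, the limit is $N(0,\mathrm{V}[F(x^*,t^*,\xi)])$. Substituting $t^*=\E[H(x^*,\xi)]$ gives (\ref{CLTVwvmmin}).

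\textbf{Main obstacle.} The delicate point is legitimately replacing $t\in\Re$ by $t\in T$ while keeping the SAA optimal value unchanged, and checking that the finite-variance condition of Assumption \ref{ass:finite} at one point extends to all of $X\times T$. The extension follows from the Lipschitz bound above, since $\E[\tilde K^2]<\infty$. For the restriction of $t$, we will use the explicit empirical minimizer $t_n$ together with the consistency of $(x_n,t_n)$ just established. No multiplier set appears, so unlike Theorem \ref{thm:asyoptval} no sup over $\Lambda$ arises and normality holds without any uniqueness condition.
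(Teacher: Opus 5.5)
Your proposal is correct and follows essentially the paper's route: reformulate via (\ref{varsp}) as an unconstrained stochastic program over $X\times T$, obtain consistency from the SAA consistency theory of Shapiro et al.\ (2014), and get the CLT from Theorem 5.7 using Assumptions \ref{ass:finite} and \ref{ass:Lipweight} together with uniqueness of $(x^*,t^*)$. The only differences are that you use Theorem 5.3 with a uniform LLN where the paper cites the convex-case Theorem 5.4, and that you justify why restricting $t$ to $T$ is eventually inactive, a step the paper only asserts. That justification is cleanest via joint convexity: once $t_n$ is interior to $T$, the restricted SAA optimum is a global optimum over $X\times\Re$.
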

\noindent\textbf{Proof.} The consistency follows from Theorem 5.4 of Shapiro et al. (2014). Note that Problem (\ref{MVWeightportExp}) has a unique optimal solution $(x^*,t^*)$ where $t^*=\E\left[H(x^*,\xi)\right]$. Then it follows from Shapiro et al. (2014) that the asymptotic normality holds. This concludes the proof of the proposition.
\hfill$\Box$

It is interesting to note that the variances in (\ref{CLTVwvmmin}) and (\ref{CLTVvmin}) share a similar structure. The difference lies in whether to use the prespecified weight or the optimal Lagrangian multiplier.     

We now consider a third mean-variance model which takes the following expression
\begin{eqnarray}\label{MVportdetVcon}
&\displaystyle\mathop{\mbox{minimize}}_{x\in X} & -\mu^{\rm T} x\\
&\mbox{subject to} &  x^{\rm T}\Sigma x \le \sigma^2, \nonumber \label{MVportfoliodetcons}
\end{eqnarray}
where $ \sigma^2$ is some prespecified risk threshold, and consider its sample approximation 
\begin{eqnarray}\label{MVportdetVconsa}
&\displaystyle\mathop{\mbox{minimize}}_{x\in X} & -\hat\mu^{\rm T} x\\
&\mbox{subject to} &  x^{\rm T}\hat\Sigma x \le \sigma^2.\nonumber \label{MVportfoliodetcons}
\end{eqnarray}
Problem (\ref{MVportdetVcon}) is equivalent to the following problem 
\begin{eqnarray}\label{MVportVconstrexp}
&\displaystyle\mathop{\mbox{minimize}}_{x\in X, t\in\Re} & \E\left[-H(x, \xi)\right]\\
&\mbox{subject to} & \E\left[\left(H(x, \xi)-t\right)^2\right]\le \sigma^2.\label{MVportVconstcon}
\end{eqnarray}
Problem (\ref{MVportdetVconsa}) is equivalent to the SAA of Problem (\ref{MVportVconstrexp})
\begin{eqnarray}\label{MVportVconstrsaa}
&\displaystyle\mathop{\mbox{minimize}}_{x\in X, t\in\Re} & \frac{1}{n}\sum_{i=1}^n\left[-H(x, \xi_i)\right]\\
&\mbox{subject to} & \frac{1}{n}\sum_{i=1}^n\left[\left(H(x, \xi_i)-t\right)^2\right]\le \sigma^2. \nonumber \label{MVportdetconst}
\end{eqnarray}
As in the analysis above, we specify a compact interval $T$ and use $t\in T$ to replace $t\in \Re$ in Problems (\ref{MVportVconstrexp}) and (\ref{MVportVconstrsaa}). We make the following assumptions. 
\begin{assumption}\label{assu:SlaterV}
There exists a point $x\in X$ such that $x^{\rm T}\Sigma x < \sigma^2$. 
\end{assumption}
This assumption implies that there exists $(x,t)\in X\times T$ such that $\E\left[\left(H(x, \xi)-t\right)^2\right]< \sigma^2$, i.e., the Slater condition holds for constraint (\ref{MVportVconstcon}). 
Different from the previous two models, the uniqueness of the optimal solution of Problem (\ref{MVportVconstrexp}) may not be guaranteed. Let $S$ denote the set of optimal solutions of Problem (\ref{MVportVconstrexp}), and $\Lambda$ denote the set of associated Lagrangian multipliers. Let $S_n$ denote the set of optimal solutions of Problem (\ref{MVportVconstrsaa}). Let $D\left(S_n,S\right)$ denote the deviation of $S_n$ to $S$. That is, $D\left(S_n,S\right)=\sup_{x\in S_n} d(x, S)$ where $d(x, S)=\inf_{y\in S} \|x-y\|$. Define 
\begin{equation}
Y(x,t,\lambda)= \lim_{n\to \infty} \sqrt{n}\left[\frac{1}{n}\sum_{i=1}^n\left[- H(x,\xi_i)+\lambda\left(H(x,\xi_i)-t\right)^2\right]-\E\left[- H(x,\xi)+\lambda \left(H(x,\xi)-t\right)^2\right]\right]. 
\end{equation}
By the CLT, $Y(x,t,\lambda)$ has a normal distribution $N(0,\sigma^2(x,t,\lambda))$ with 
\[\sigma^2(x,t,\lambda)=\mbox{V}\left[- H(x,\xi)+\lambda \left(H(x,\xi)-t\right)^2\right].\] 
By Theorem 5.11 of Shapiro et al. (2014), we can obtain the following result. 
\begin{proposition}\label{pro:MVVconstr}
Suppose that Assumptions \ref{assu:lip}, \ref{assu:lip:variance}, \ref{assu:SlaterV} are satisfied. Then $v_n \to v^*$ w.p.1, $D\left(S_n,S\right)\to 0$ w.p.1, and 
\begin{equation*}
\sqrt{n} \left(v_n-v^*\right) \Rightarrow \inf_{(x,t)\in S} \sup_{\lambda\in \Lambda}\ Y(x,t, \lambda).
\end{equation*}
If, moreover, $S=\left\{(x^*,t^*)\right\}$ and $\Lambda=\left\{\lambda^*\right\}$ are singletons, then  
\begin{equation*}
\sqrt{n} \left(v_n-v^*\right) \Rightarrow N(0,\sigma^2), 
\end{equation*}
where
\begin{equation}\label{equ:normv3}
\sigma^2=\mbox{V}\left[-H(x^*,\xi)+\lambda^*\left(H(x^*,\xi)-\E\left[H(x^*,\xi)\right]\right)^2\right].
\end{equation}
\end{proposition}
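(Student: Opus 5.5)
The plan is to follow the proof of Theorem \ref{thm:asyoptval}, with the roles of objective and constraint exchanged. First, use the identity (\ref{varsp}) together with the argument of Proposition \ref{pro:mvequ} to confirm the equivalences. Problem (\ref{MVportdetVcon}) is equivalent to Problem (\ref{MVportVconstrexp}), and Problem (\ref{MVportdetVconsa}) is equivalent to Problem (\ref{MVportVconstrsaa}), which is its SAA. This holds because $x$ is feasible for the variance constraint if and only if some $t$ gives $\E[(H(x,\xi)-t)^2]\le\sigma^2$, namely $t=\E[H(x,\xi)]$. Restricting $t$ to the compact interval $T$ is harmless as long as $T$ contains the means $\mu^{\rm T}x$ for $x\in X$ in its interior. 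Then every optimal $(x,t)$ has $t$ interior to $T$, and the optimal values do not change. The same holds for the SAA once $\hat\mu^{\rm T}x\in T$, which is eventually true w.p.1 by the strong law of large numbers.

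Second, establish consistency. The functions $-H(x,\xi)$ and $(H(x,\xi)-t)^2$ are convex (indeed linear and convex quadratic) in $(x,t)$. By Assumptions \ref{assu:lip} and \ref{assu:lip:variance} they are dominated by integrable functions on the compact set $X\times T$. Theorem 7.55 of Shapiro et al. (2014) then gives the uniform law of large numbers for both, as in the proof of Theorem \ref{thm:asyoptval}. The constrained consistency theorem (Theorem 5.5 of Shapiro et al. (2014)) then applies. Its condition (a), that the feasible set is nonempty and the expectations are continuous, is immediate. Its condition (b) is the main obstacle. We need every $(x,t)\in S$ to be a limit of points feasible for the SAA, w.p.1. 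I would use Assumption \ref{assu:SlaterV} and convexity for this. Take a Slater point $(\bar x,\bar t)$ with $\bar t=\mu^{\rm T}\bar x$, so the constraint is strictly satisfied. For $(x,t)\in S$, the points $(x_\alpha,t_\alpha)=(1-\alpha)(x,t)+\alpha(\bar x,\bar t)$ are strictly feasible for every $\alpha\in(0,1]$, by convexity of the constraint function. By the uniform LLN they are feasible for the SAA for all large $n$ w.p.1. Letting $\alpha\downarrow 0$ along a countable sequence and taking a diagonal argument gives SAA-feasible points converging to $(x,t)$. This yields $v_n\to v^*$ and $D(S_n,S)\to0$ w.p.1. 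Uniqueness is not needed, which is why the result is stated with the deviation $D(S_n,S)$.

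Third, derive the asymptotic distribution. Under the Slater condition the convex problem has a nonempty, bounded set $\Lambda$ of Lagrange multipliers, and the Lagrangian is $L(x,t,\lambda)=\E[-H(x,\xi)+\lambda((H(x,\xi)-t)^2-\sigma^2)]$. Assumptions \ref{assu:lip} and \ref{assu:lip:variance} supply the Lipschitz moduli with finite second moments and the finite second moments at one point. By the functional CLT in $C(X\times T)$, these give joint weak convergence of $\sqrt n$ times the centered empirical processes of $-H$ and $(H-t)^2$. Theorem 5.11 of Shapiro et al. (2014) then yields the stated limit $\inf_{(x,t)\in S}\sup_{\lambda\in\Lambda}Y(x,t,\lambda)$. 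The constant $-\lambda\sigma^2$ cancels in the centering, so it does not appear in $Y$.

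Finally, treat the singleton case. When $S=\{(x^*,t^*)\}$ and $\Lambda=\{\lambda^*\}$, the limit is $Y(x^*,t^*,\lambda^*)\sim N(0,\sigma^2(x^*,t^*,\lambda^*))$. From the first step, $t^*=\E[H(x^*,\xi)]$, and substituting this gives (\ref{equ:normv3}).
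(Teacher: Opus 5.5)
Your proposal is correct and follows the paper's route. The paper omits this proof as a rerun of the proof of Theorem~\ref{thm:asyoptval}: the SAA reformulation via (\ref{varsp}), the uniform LLN from Theorem 7.55, consistency from Theorem 5.5 with condition (b) supplied by the Slater point, and the limit from Theorem 5.11 of Shapiro et al.\ (2014). You make explicit what it leaves implicit: the choice of $T$, the segments to the Slater point that give SAA-feasible points, and $t^*=\E[H(x^*,\xi)]$ in the singleton case.

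One caveat, which the paper shares. Condition (A1) of Theorem 5.11 for the constraint integrand $(H(x,\xi)-t)^2$ requires $\E[(H(\bar x,\xi)-\bar t)^4]<\infty$ at some point. Assumption~\ref{assu:lip:variance} gives only $\E[\tilde M(\xi)]<\infty$ and a square-integrable Lipschitz modulus, so it does not guarantee this in general. It can fail, for example, when $X$ moves $x$ only along directions in which $\xi$ is bounded while $\E[(x^{\rm T}\xi)^4]=\infty$. Your remark that Assumptions~\ref{assu:lip} and \ref{assu:lip:variance} supply ``finite second moments at one point'' is therefore a tacit fourth-moment hypothesis.
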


The proof of Proposition \ref{pro:MVVconstr} is similar to that of Theorem \ref{thm:asyoptval} and thus is omitted. The asymptotic variance in (\ref{equ:normv3}) also depends on the first four moments of random vector $\xi$. If the constraint (\ref{MVportVconstcon}) is not binding at $(x^*,t^*)$, i.e., $\E\left[\left(H(x^*, \xi)-t^*\right)^2\right]< \sigma^2$, then the optimal multiplier $\lambda^*=0$ by the complementary slackness (Boyd and Vandenberghe 2004). In this case, the asymptotic variance $\sigma^2=\mbox{V}\left[-H(x^*,\xi)\right]$ which only depends on the first two moments of the random returns. An intuitive interpretation of this result is that adding or removing the risk constraint does not affect the optimal solution of the portfolio optimization. The sample approximation in this case may tend to have a smaller variation.

\section{Distributionally Robust Mean-Variance Portfolio Optimization}\label{sec:DRO}

In the previous sections, we have discussed the statistical properties of the sample based mean-variance model. It can be seen that there may be significant variability for this data-driven approach. To address the uncertainty in portfolio optimization, the distributionally robust optimization (DRO) approach has been widely studied and adopted, see, e.g., Blanchet et al. (2022) and the references therein. In this paper, we study two classes of ambiguity sets modeled by the $\phi$-divergence and the Wasserstein distance respectively. We follow Blanchet et al. (2022) and consider the following DRO model
\begin{eqnarray}\label{DRO}
&\displaystyle\mathop{\mbox{minimize}}_{x\in X} &  \displaystyle\mathop{\mbox{maximize}}_{P\in \mathbb{P}} \ \mbox{V}_P\left[H(x, \xi)\right] \\
&\mbox{ subject to} & \displaystyle\mathop{\mbox{minimize}}_{P\in
\mathbb{P}} \ \E_P\left[H(x, \xi)\right]
\ge r, \nonumber
\end{eqnarray}
where $\E_P$ and $\mbox{V}_P$ denote that the expectation and the variance are taken with respect to $P$, and $\mathbb{P}$ is an ambiguity set. As emphasized in Blanchet et al. (2022), a main obstacle for addressing Problem (\ref{DRO}) is that the variance function is nonlinear in $P$. We highlight that this difficulty could be overcome by reformulating the problem in an expectation form, as outlined in the previous sections. Specifically, we reformulate Problem (\ref{DRO}) as   
\begin{eqnarray}\label{DROExp}
&\displaystyle\mathop{\mbox{minimize}}_{x\in X, t\in\Re} &  \displaystyle\mathop{\mbox{maximize}}_{P\in \mathbb{P}} \  \E_P\left[\left(H(x, \xi)-t\right)^2\right]\\
&\mbox{ subject to} & \displaystyle\mathop{\mbox{minimize}}_{P\in
\mathbb{P}} \ \E_P\left[H(x, \xi)\right]
\ge r.\label{DROExpconst}\nonumber
\end{eqnarray}
Based on the reformulation we can invoke the duality theory to transform the DRO model to a single layer optimization problem.

\subsection{$\phi$-Divergence}

We first consider an ambiguity set modeled by the $\phi$-divergence. The $\phi$-divergence is induced by a $\phi$-divergence function $\phi(t), t>0$. The function is convex in $t>0$, and satisfies $\phi(1)=0$, $0\phi(a/0):= a\lim_{t\to \infty} \phi(t)/t$, and $0\phi(0/0):=0$, see the definitions in Pardo (2006) and Ben-Tal et al. (2013). For two distributions $P$ and $P_0$, the $\phi$-divergence from $P$ to $P_0$ is defined as
\begin{equation*}\label{phiDEF}
D_{\phi}(P\|P_0) = \int_{\Xi} p_0(z)
\phi\left(\frac{p(z)}{p_0(z)}\right) \, {\rm
d}z=\E_{P_0}\left[\phi\left(\frac{p(\xi)}{p_0(\xi)}\right)\right], 
\end{equation*} 
where $p(z)$ and $p_0(z)$ are probability density (or mass) functions of $P$ and $P_0$ respectively, see the discussions in Hu and Hong (2022). The ambiguity set modeled by the $\phi$-divergence is defined as
\begin{equation}\label{ambsetphi} 
\mathbb{P}=\left\{P: D_{\phi}(P\|P_0)\le \delta \right\},  
\end{equation} 
where $P_0$ is a nominal distribution of $\xi$ and $\delta$ is a constant measuring the size of the ambiguity set. Note that to construct $\mathbb{P}$, we first need to specify $P_0$ based on the data or information available. We consider two representative $\phi$-divergences, namely the Kullback-Leibler (KL) divergence and the $\chi^2$-distance.   

Consider Problem (\ref{DROExp}) with the ambiguity set (\ref{ambsetphi}).  Suppose that $D_{\phi}$ is the KL divergence, for which $\phi(t)=t\log t$. Let $M_1(s)$ and $M_2(s)$ denote the moment generating functions of $\left(H(x, \xi)-t\right)^2$ and $-H(x, \xi)$ under $P_0$ respectively. Assume that for every $x\in X$ and $t\in \Re$, there exist $s_1$ and $s_2$ such that $M_1(s_1)<+\infty$ and $M_2(s_2)<+\infty$. Then by the strong duality (see, e.g., Hu and Hong (2012, 2022), Problem (\ref{DROExp}) is equivalent to the following problem
\begin{eqnarray}\label{DROKL}
&\displaystyle\mathop{\mbox{minimize}}_{x\in X, t\in\Re, \lambda_1,\lambda_2\in \Re, \alpha_1,\alpha_2\ge 0} & \E_{P_0}\left[\alpha_1
\exp\left\{\frac{\left(H(x, \xi)-t\right)^2+\lambda_1}{\alpha_1}-1\right\}\right]+\delta\alpha_1-\lambda_1 \\
&\mbox{ subject to} & \E_{P_0}\left[\alpha_2
\exp\left\{\frac{-H(x, \xi)+\lambda_2}{\alpha_2}-1\right\}\right]+\delta \alpha_2-\lambda_2  \le -r, \nonumber
\end{eqnarray}
which is also equivalent to 
\begin{eqnarray*}\label{DROKLlog}
	&\displaystyle\mathop{\mbox{minimize}}_{x\in X, t\in\Re, \alpha_1,\alpha_2\ge 0} & \alpha_1 \ln \E_{P_0}\left[
	\exp\left\{\frac{\left(H(x, \xi)-t\right)^2}{\alpha_1}\right\}\right]+\delta\alpha_1 \\
	&\mbox{ subject to} & \alpha_2 \ln \E_{P_0}\left[
	\exp\left\{\frac{-H(x, \xi)}{\alpha_2}\right\}\right]+\delta \alpha_2  \le -r. \nonumber
\end{eqnarray*}

Suppose that $D_{\phi}$ is the $\chi^2$-distance, for which $\phi(t)=(t-1)^2$. Assume that for every $x\in X$ and $t\in \Re$, $\E_{P_0}\left[\left(H(x, \xi)-t\right)^4\right]<+\infty$ and $\E_{P_0}\left[\left(H(x, \xi)\right)^2\right]<+\infty$. Then Problem (\ref{DROExp}) is equivalent to the following problem % (see, e.g., Hu and Hong (2022))
\begin{eqnarray}\label{DROChi}
&\displaystyle\mathop{\mbox{minimize}}_{x\in X, t\in\Re, \lambda_1,\lambda_2\in \Re, \alpha_1,\alpha_2\ge 0} & \frac{1}{4\alpha_1}\E_{P_0}\left[\left(\left[\left(H(x, \xi)-t\right)^2+\lambda_1+2\alpha_1\right]^+\right)^2\right]+(\delta-1)\alpha_1
-\lambda_1 \\
&\mbox{ subject to} & \frac{1}{4\alpha_2}\E_{P_0}\left[\left(\left[-H(x, \xi)+\lambda_2+2\alpha_2\right]^+\right)^2\right]+(\delta-1)\alpha_2
-\lambda_2 \le -r. \nonumber
\end{eqnarray}
As standard convex stochastic optimization problems, the reformulations (\ref{DROKL}) and (\ref{DROChi}) can be solved by the conventional sampling based stochastic optimization techniques. Note that directly addressing the worst-case variance with (\ref{ambsetphi}) might involve complicated analysis. After making the transformation, the reformulation becomes relatively easy. The framework can also be adopted naturally to build reformulations for the DRO models of the two other mean-variance variants considered in Section \ref{sec:MVA}.

\subsection{Wasserstein Distance}

We now consider an ambiguity set modeled by the Wasserstein distance. Problem (\ref{DRO}) with this ambiguity set was studied in Blanchet et al. (2022). We follow Blanchet et al. (2022) to introduce the Wasserstein distance and the ambiguity set. Let $\mathcal{P}\left(\Re^k\times \Re^k \right)$ be the set of distributions supported on $\Re^k\times \Re^k$. An element $\pi\in \mathcal{P}\left(\Re^k\times \Re^k \right)$ is a joint distribution of the random vector $\left(U,V\right)$ with $U,V\in \Re^k$. Let $\pi_U$ and $\pi_V$ denote the marginal distributions of $U$ and $V$ under $\pi$. Let $c: \Re^k\times \Re^k \to [0,+\infty]$ be a cost function which is lower semicontinuous satisfying $c(u,u)=0$ for any $u\in \Re^k$. More specifically, consider 
\begin{equation*}
c\left(u,v\right)=\left\|u-v\right\|_q^2, 
\end{equation*}
where $\|\cdot\|_q$ denotes the $L_q$ norm and $q\ge 1$ is a given constant. The Wasserstein distance (of order two) between two distributions $P$ and $Q$ is defined as 
\begin{equation*}\label{Wassdis}
D_c\left(P, Q\right)=\inf \left\{\E_\pi \left[c(U,V)\right]: \pi\in \mathcal{P}\left(\Re^k\times \Re^k \right), \pi_U=P, \pi_V=Q\right\}. 
\end{equation*}
Suppose that $\left\{\xi_1,\xi_2,\cdots,\xi_n\right\}$ is a set of i.i.d. sample of $\xi$. Let $P_n$ denote the corresponding empirical distribution. Define the ambiguity set 
\begin{equation}\label{ambsetW}
\mathbb{P}=\left\{P: D_c\left(P, P_n\right) \le \delta\right\}. 
\end{equation}
Consider Problem (\ref{DRO}) with the ambiguity set (\ref{ambsetW}). Let $\|\cdot\|_p$ denote the dual norm of $\|\cdot\|_q$ where $\frac{1}{p}+\frac{1}{q}=1$. The following proposition is built by Blanchet et al. (2022). 
\begin{proposition}[Blanchet et al. (2022)]\label{pro:DRO}
Problem (\ref{DRO}) with the ambiguity set (\ref{ambsetW}) is equivalent to the following problem: 
\begin{eqnarray}\label{MVsampleportfolioDROWRef}
&\displaystyle\mathop{\mbox{minimize}}_{x\in X} & \sqrt{x^{\rm T}\hat\Sigma x} + \sqrt{\delta}\|x\|_p \\
&\mbox{subject to} & \hat\mu^{\rm T} x \ge r+ \sqrt{\delta}\|x\|_p. \nonumber
\end{eqnarray}
\end{proposition}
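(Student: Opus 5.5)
The plan is to use the expectation form (\ref{DROExp}) and compute both inner problems of (\ref{DRO}) in closed form for each fixed $x$. Throughout, fix $x\in X$ and write $a_i(t)=x^{\rm T}\xi_i-t$. Let $\Delta^*\in\Re^k$ be a vector with $\|\Delta^*\|_q=1$ and $x^{\rm T}\Delta^*=\|x\|_p$; it exists by the definition of the dual norm. I expect to get the equalities
\begin{equation*}
\inf_{P\in\mathbb{P}}\E_P[x^{\rm T}\xi]=\hat\mu^{\rm T}x-\sqrt{\delta}\|x\|_p,\qquad \sup_{P\in\mathbb{P}}\mbox{V}_P[x^{\rm T}\xi]=\Big(\sqrt{x^{\rm T}\hat\Sigma x}+\sqrt{\delta}\|x\|_p\Big)^2 .
\end{equation*}
Since $s\mapsto s^2$ is increasing on $[0,\infty)$, minimizing the worst-case variance is equivalent to minimizing its square root. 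Together with the first identity, this gives (\ref{MVsampleportfolioDROWRef}).

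\emph{Worst-case mean.} Take any $P\in\mathbb{P}$ and a coupling $\pi$ with $\pi_U=P$, $\pi_V=P_n$ and $\E_\pi\|U-V\|_q^2\le\delta$ (or $\delta+\epsilon$, letting $\epsilon\downarrow 0$). By H\"older's inequality and Jensen's inequality,
\begin{equation*}
\E_P[x^{\rm T}\xi]-\hat\mu^{\rm T}x=\E_\pi[x^{\rm T}(U-V)]\ge-\|x\|_p\,\E_\pi\|U-V\|_q\ge-\|x\|_p\sqrt{\delta}.
\end{equation*}
For attainment, I shift every sample point to $\xi_i-\sqrt{\delta}\Delta^*$. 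This distribution lies in $\mathbb{P}$ and achieves the bound.

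\emph{Worst-case variance.} This step carries the main difficulty, because the variance is nonlinear in $P$. For the upper bound, by (\ref{varsp}) we have $\mbox{V}_P\le\E_P[(x^{\rm T}\xi-t)^2]$ for every $t$, and I take $t=\hat\mu^{\rm T}x$. Under a coupling $\pi$ as above, $x^{\rm T}U-t=a(V)+x^{\rm T}(U-V)$, where $a(V)=x^{\rm T}V-t$. Minkowski's inequality in $L_2(\pi)$ together with H\"older's inequality then gives
\begin{equation*}
\sqrt{\E_P[(x^{\rm T}\xi-t)^2]}\le\sqrt{\tfrac1n\textstyle\sum_i a_i(t)^2}+\|x\|_p\sqrt{\E_\pi\|U-V\|_q^2}\le\sqrt{x^{\rm T}\hat\Sigma x}+\sqrt{\delta}\|x\|_p .
\end{equation*}
This handles general couplings, including ones that split mass, so no duality or minimax theorem is needed.

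For the lower bound, I construct an explicit worst case. Let $s=\sqrt{x^{\rm T}\hat\Sigma x}$, so that $s^2=\frac1n\sum_i a_i^2$ with $a_i=a_i(\hat\mu^{\rm T}x)$. If $s>0$, move $\xi_i$ to $\xi_i+\sqrt{\delta}(a_i/s)\Delta^*$. The transport cost is $\frac1n\sum_i\delta a_i^2/s^2=\delta$, so the new distribution is in $\mathbb{P}$. Because $\sum_i a_i=0$, the mean of $x^{\rm T}\xi$ is unchanged. Each centered value becomes $a_i(1+\sqrt{\delta}\|x\|_p/s)$, so the variance is exactly $(s+\sqrt{\delta}\|x\|_p)^2$. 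In the degenerate case $s=0$, I split each atom into two half-mass atoms at $\xi_i\pm\sqrt{\delta}\Delta^*$, which gives variance $\delta\|x\|_p^2$.

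Combining the two identities yields the claimed equivalence.
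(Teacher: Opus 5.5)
Your proof is correct, and it takes a genuinely different route from the paper.

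The paper proceeds by duality:
\begin{itemize}
\item It rewrites the variance as $\inf_t \E_P[(x^{\rm T}\xi-t)^2]$ and passes to $\inf_t\sup_P$ through Problem (\ref{DROExp}), without justifying the interchange.
\item It evaluates $\sup_{P\in\mathbb{P}}\E_P[(x^{\rm T}\xi-t)^2]$ in closed form with the Wasserstein strong duality of Blanchet et al.\ (2019): an inner supremum over the displacement $\Delta$, then an infimum over the multiplier $\gamma$.
\item This gives $\bigl(\sqrt{\frac1n\sum_i(x^{\rm T}\xi_i-t)^2}+\sqrt{\delta}\|x\|_p\bigr)^2$, which it then minimizes over $t$.
\item It treats the constraint only by citing the analogous duality argument in Blanchet et al.\ (2022).
\end{itemize}

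You work entirely on the primal side. H\"older, Jensen and Minkowski over an arbitrary coupling give the upper and lower bounds. Explicit transport plans attain them: the uniform shift by $-\sqrt{\delta}\Delta^*$, the proportional stretch $\xi_i\mapsto\xi_i+\sqrt{\delta}(a_i/s)\Delta^*$, and the split atoms when $s=0$. Your alignment vector $\Delta^*$ is the same device the paper uses when it evaluates $\Phi_\gamma$, but you never need a strong duality theorem.

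Your route buys two things. First, it is self-contained, and it proves the constraint directly instead of by citation. Second, it adds rigor. Your worst-case distribution keeps the mean of $x^{\rm T}\xi$ at $\hat\mu^{\rm T}x$, and your Minkowski bound holds at exactly that centering, so together they give a saddle point. The interchange $\sup_P\inf_t=\inf_t\sup_P$ that the paper takes for granted is therefore verified rather than assumed.

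The paper's route buys generality. Duality yields $\sup_{P}\E_P[(x^{\rm T}\xi-t)^2-w\,x^{\rm T}\xi]$ for every $t$ and every $w\ge 0$, which is what the paper's weighted mean--variance reformulation needs. Your construction is tailored to $w=0$ and the single centering $t=\hat\mu^{\rm T}x$.
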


In what follows, we consider a DRO model for Problem  (\ref{MVWeightport}) that optimizes a weighted sum of mean and variance. 
\begin{eqnarray}\label{MVWeightportDROW}
&\displaystyle\mathop{\mbox{minimize}}_{x\in X} \
\displaystyle\mathop{\mbox{maximize}}_{P\in \mathbb{P}}  & \mbox{V}_P\left[H(x, \xi)\right]-w\E_P\left[H(x, \xi)\right].
\end{eqnarray}
When $w=0$, the objective of Problem (\ref{MVWeightportDROW}) becomes minimizing the worst-case variance. We build the following proposition that reformulates Problem (\ref{MVWeightportDROW}) to a single layer optimization problem. The proof of this proposition follows the main steps of that of Proposition A.1 in Blanchet et al. (2022) and uses techniques presented in the proof of Proposition 2 in Blanchet et al. (2019). We also illustrate that the proof of this proposition can be directly used to prove  Proposition \ref{pro:DRO}. 
\begin{proposition}
Problem (\ref{MVWeightportDROW}) with the ambiguity set (\ref{ambsetW}) is equivalent to the following problem: 
\begin{equation*}\label{}
\displaystyle\mathop{\mbox{minimize}}_{x\in X, t\in\Re} \ \frac{1}{n}\sum_{i=1}^n\left[\left(H(x, \xi_i)-t\right)^2-w H(x, \xi_i)\right]+\sqrt{\delta}\sqrt{\frac{1}{n}\sum_{i=1}^n\left(2\left(H(x, \xi_i)-t\right)-w\right)^2}\|x\|_p+\delta\|x\|_p^2.
\end{equation*}
\end{proposition}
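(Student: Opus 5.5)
Following the paper's device, I will first rewrite the variance as an infimum over $t$: for each $x$,
\[
\max_{P\in\mathbb{P}} \Big\{\mbox{V}_P[H(x,\xi)]-w\E_P[H(x,\xi)]\Big\}=\max_{P\in\mathbb{P}}\ \inf_{t\in\Re}\E_P\left[(x^{\rm T}\xi-t)^2-w\,x^{\rm T}\xi\right].
\]
The first main step is to interchange $\max_P$ and $\inf_t$. The integrand is convex in $t$ and linear in $P$, and the Wasserstein ball (\ref{ambsetW}) is convex. After restricting $t$ to a compact interval, as was done earlier in the paper (legitimate because the inner optimal $t$ is $\E_P[x^{\rm T}\xi]$, which is uniformly bounded over the ball), Sion's minimax theorem applies. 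Joint minimization over $(x,t)$ then turns Problem (\ref{MVWeightportDROW}) into
\[
\min_{x\in X,\,t}\ \sup_{P:\,D_c(P,P_n)\le\delta}\E_P[f(x,t,\xi)],\qquad f(x,t,\xi)=(x^{\rm T}\xi-t)^2-w\,x^{\rm T}\xi .
\]

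The second step evaluates the worst-case expectation of this quadratic in $\xi$. I will use the same route as Proposition 2 of Blanchet et al. (2019) and Proposition A.1 of Blanchet et al. (2022). Fix $(x,t)$ and parametrize perturbations as $\xi_i+\Delta_i$, with transport budget $\frac1n\sum_i\|\Delta_i\|_q^2\le\delta$. For the upper bound, write $a_i=x^{\rm T}\xi_i-t$. Then
\[
f(x,t,\xi_i+\Delta_i)=f(x,t,\xi_i)+(2a_i-w)\,x^{\rm T}\Delta_i+(x^{\rm T}\Delta_i)^2 .
\]
By H\"older, $|x^{\rm T}\Delta_i|\le\|x\|_p\|\Delta_i\|_q$. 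Applying Cauchy--Schwarz across $i$ then bounds the average by
\[
\frac1n\sum_i f(x,t,\xi_i)+\sqrt{\delta}\,\|x\|_p\Big(\frac1n\sum_i(2a_i-w)^2\Big)^{1/2}+\delta\|x\|_p^2 .
\]
To cover general couplings $\pi$ rather than only transport maps, I will either apply the same inequalities pointwise under $\pi$ or invoke strong duality for Wasserstein DRO, $\sup_P \E_P f=\inf_{\gamma\ge0}\{\gamma\delta+\frac1n\sum_i\sup_\xi[f(\xi)-\gamma\|\xi-\xi_i\|_q^2]\}$. In either form the bound is the same.

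For the lower bound (attainment), let $u$ be a unit $q$-norm vector with $x^{\rm T}u=\|x\|_p$, which exists by norm duality. Set $\Delta_i=c\,(2a_i-w)\,u$, with $c\ge0$ chosen so that $\frac1n\sum_i\|\Delta_i\|_q^2=\delta$. Every H\"older and Cauchy--Schwarz step is then tight: the linear term equals $\sqrt\delta\|x\|_p(\frac1n\sum(2a_i-w)^2)^{1/2}$, and the quadratic term $\frac1n\sum(x^{\rm T}\Delta_i)^2=\delta\|x\|_p^2$. This discrete distribution lies in the ball, so the supremum equals the bound. In the degenerate case where all $2a_i-w=0$, I will instead take $\Delta_i=\sqrt\delta\,u$ for every $i$, which still yields $\delta\|x\|_p^2$. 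Substituting the worst-case value into the outer minimization over $(x,t)$ gives the stated single-layer problem.

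I expect the main obstacle to be justifying the minimax interchange rigorously. Sion requires compactness of one side. The Wasserstein ball is weakly compact when second moments are controlled, or one can compactify $t$ as above. I will need uniform boundedness of $\E_P[x^{\rm T}\xi]$ over the ball, which follows from $D_c(P,P_n)\le\delta$ together with H\"older. Upper semicontinuity in $P$ can be delicate, so I would choose to compactify $t$.

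For Proposition \ref{pro:DRO}, the same computation handles the constraint: the worst-case mean is $\hat\mu^{\rm T}x-\sqrt\delta\|x\|_p$. The worst-case variance is $(\sqrt{x^{\rm T}\hat\Sigma x}+\sqrt\delta\|x\|_p)^2$, obtained by minimizing the above formula with $w=0$ over $t$, which gives $t=\hat\mu^{\rm T}x$.
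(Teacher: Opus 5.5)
Your proof is correct, but it evaluates the worst-case expectation by a different route from the paper. The paper invokes Wasserstein strong duality (Proposition 1 of Blanchet et al.\ (2019)). It computes the per-sample dual function $\Phi_\gamma(\xi_i;x,t)$ in closed form by reducing the supremum over $\Delta$ to a scalar problem in $\|\Delta\|_q$, and then minimizes $\delta\gamma+\frac{1}{n}\sum_i\Phi_\gamma(\xi_i;x,t)$ over $\gamma>\|x\|_p^2$. You stay in the primal instead. The expansion $f(x,t,\xi_i+\Delta)=f(x,t,\xi_i)+(2a_i-w)x^{\rm T}\Delta+(x^{\rm T}\Delta)^2$, together with H\"older and Cauchy--Schwarz under an arbitrary coupling, gives the upper bound. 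Placing mass $1/n$ at each $\xi_i+c(2a_i-w)u$ then makes every inequality tight.

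Your version buys three things. It is self-contained: no duality theorem is needed, since the pointwise-under-$\pi$ bound already covers general couplings. It exhibits the worst-case distribution explicitly. It also treats the degenerate case $b=0$ cleanly, whereas the paper's minimizer $\gamma=\sqrt{b\|x\|_p^2/(4\delta)}+\|x\|_p^2$ lands on the excluded boundary $\gamma=\|x\|_p^2$. The dual computation, in turn, is the more mechanical template, and it carries over to losses or costs for which a tight primal construction is hard to guess.

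You also justify the interchange of $\max_P$ and $\inf_t$, which the paper's first line simply asserts. One small correction there: compactifying $t$ does not by itself remove Sion's upper semicontinuity requirement in $P$. This is harmless, because $P\mapsto\E_P[f(x,t,\xi)]$ is affine and finite on the ball. You may give the non-compact $P$-side the weak topology generated by these functionals, or use Ky Fan's minimax theorem, which needs lower semicontinuity only on the compact $t$-side.
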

\noindent\textbf{Proof.} 
Firstly, Problem (\ref{MVWeightportDROW}) is equivalent to 
\begin{eqnarray}\label{DROref0}
&\displaystyle\mathop{\mbox{minimize}}_{x\in X, t\in\Re} &  \displaystyle\mathop{\mbox{maximize}}_{P\in \mathbb{P}} \  \E_P\left[\left(H(x, \xi)-t\right)^2-w H(x, \xi) \right].\label{}
\end{eqnarray}
Let $v(x,t)$ denote the optimal value of the inner maximization problem in Problem (\ref{DROref0}). We derive a formulation for $v(x,t)$. By the strong duality (Proposition 1 of Blanchet et al. (2019)), we have 
\begin{equation*}
v(x,t)= \inf_{\gamma\ge 0} \left[\delta \gamma  + \frac{1}{n}\sum_{i=1}^n \Phi_{\gamma}\left(\xi_i; x,t\right)\right],
\end{equation*} 
where 
\begin{eqnarray*}
\Phi_{\gamma}\left(\xi_i; x,t\right)&=&\sup_{u}\left[\left(H(x, u)-t\right)^2-wH(x,u)-\gamma c(u,\xi_i)\right]\\
&=&\sup_{u}\left[\left(x^{\rm T} u-t\right)^2-wx^{\rm T} u-\gamma \left\|u-\xi_i\right\|_q^2 \right].\nonumber
\end{eqnarray*}

Let $\Delta=u-\xi_i$. Then $u=\Delta+\xi_i$ and 
 \begin{eqnarray*}
\Phi_{\gamma}\left(\xi_i;x,t\right)&=&\sup_{\Delta}\left[\left(x^{\rm T} \Delta+x^{\rm T} \xi_i-t\right)^2-wx^{\rm T} \Delta-wx^{\rm T} \xi_i-\gamma \left\|\Delta \right\|_q^2 \right]\\
&=&\sup_{\Delta}\left[\left(x^{\rm T} \xi_i-t\right)^2+2\left(x^{\rm T} \xi_i-t\right)x^{\rm T} \Delta+\left(x^{\rm T} \Delta\right)^2 -wx^{\rm T} \Delta-wx^{\rm T} \xi_i-\gamma  \left\|\Delta \right\|_q^2\right]\\
&=&\sup_{\Delta}\left[\left(x^{\rm T} \xi_i-t\right)^2-wx^{\rm T} \xi_i+\left[2\left(x^{\rm T} \xi_i-t\right)-w\right]x^{\rm T} \Delta+\left(x^{\rm T} \Delta\right)^2 -\gamma  \left\|\Delta \right\|_q^2\right]\\
&=&\sup_{\Delta}\left[\left(x^{\rm T} \xi_i-t\right)^2-wx^{\rm T} \xi_i+\left|2\left(x^{\rm T} \xi_i-t\right)-w\right| \left\|x\right\|_p\left\|\Delta\right\|_q+\left(\left\|x\right\|_p\left\|\Delta\right\|_q\right)^2 -\gamma  \left\|\Delta \right\|_q^2\right]\\
&=&\sup_{\Delta}\left[\left(x^{\rm T} \xi_i-t\right)^2-wx^{\rm T} \xi_i+\left|2\left(x^{\rm T} \xi_i-t\right)-w\right| \left\|x\right\|_p\left\|\Delta\right\|_q+\left(\left\|x\right\|_p^2-\gamma\right) \left\|\Delta \right\|_q^2\right]\\
&=&\left\{\begin{array}{cc} \left(x^{\rm T} \xi_i-t\right)^2-wx^{\rm T} \xi_i + \frac{\left(2\left(x^{\rm T} \xi_i-t\right)-w\right)^2\left\|x\right\|_p^2}{4\left(\gamma -\left\|x\right\|_p^2\right)} &
	\left\|x\right\|_p^2-\gamma< 0 \\[6pt]  
+\infty, & \left\|x\right\|_p^2-\gamma\ge 0 
\end{array}\right. .\nonumber
\end{eqnarray*}

Denote $a=\frac{1}{n}\sum_{i=1}^n \left[\left(x^{\rm T} \xi_i-t\right)^2-wx^{\rm T} \xi_i\right]$ and $b=\frac{1}{n}\sum_{i=1}^n \left(2\left(x^{\rm T} \xi_i-t\right)-w\right)^2$. It follows that
\begin{eqnarray*}
v(x,t)&=&\inf_{\gamma> \left\|x\right\|_p^2}  \left\{ a+\delta\gamma +\frac{b\left\|x\right\|_p^2}{4\left(\gamma -\left\|x\right\|_p^2\right)}\right\}\\
&=&\inf_{\gamma> \left\|x\right\|_p^2} \left\{ a+\delta\left(\gamma-\left\|x\right\|_p^2\right) +\frac{b\left\|x\right\|_p^2}{4\left(\gamma -\left\|x\right\|_p^2\right)}+\delta \left\|x\right\|_p^2\right\}\\
&=& a+\sqrt{\delta b} \left\|x\right\|_p +\delta \left\|x\right\|_p^2,\nonumber
\end{eqnarray*}
where the last equality holds since the infimum is attained when $\gamma=\sqrt{\frac{b\left\|x\right\|_p^2}{4\delta}}+\left\|x\right\|_p^2$. This concludes the proof of the proposition. \hfill$\Box$

We note that when $w=0$, the optimal value function becomes 
\begin{eqnarray*}
v(x,t) &=&\frac{1}{n}\sum_{i=1}^n\left(H(x, \xi_i)-t\right)^2+\sqrt{\delta}\sqrt{\frac{1}{n}\sum_{i=1}^n\left(2\left(H(x, \xi_i)-t\right)\right)^2}\|x\|_p+\delta\|x\|_p^2\\
&=&\left(\sqrt{\frac{1}{n}\sum_{i=1}^n\left(H(x, \xi_i)-t\right)^2}+\sqrt{\delta} \|x\|_p\right)^2.\nonumber
\end{eqnarray*}
According to the analysis in Section \ref{sec:MV}, we have 
\begin{equation*}
\inf_{t\in \Re} \ v(x,t)= \left(\sqrt{x^{\rm T}\hat\Sigma x} + \sqrt{\delta}\|x\|_p\right)^2, 
\end{equation*}
where $\hat\Sigma$ is defined in (\ref{equ:estimator}). The same duality argument can be applied for the constraint in Problem (\ref{DRO}) (see Proposition A.1 of Blanchet et al. (2022)), yielding the equivalence of the constraints in Problem (\ref{DRO}) and Problem (\ref{MVsampleportfolioDROWRef}). This shows Proposition \ref{pro:DRO}. As pointed out by Blanchet et al. (2022), Proposition  \ref{pro:DRO} shows a clear relation between robustification and regularization in the mean-variance model. However, we observe that such relation seems to be more subtle when considering the weighted sum of mean and variance as the objective. 

In this section, we have illustrated that by writing the variance of the portfolio as the optimal objective function of an expectation, we can substantially simplify the analysis of the DRO of the mean-variance model with both the $\phi$-divergence and the Wasserstein distance.

\section{Numerical Illustrations}\label{sec:num}

In this section, we conduct some numerical experiments to justify the theoretical results built in the paper. As there is an extensive empirical study in Blanchet et al. (2022), we do not conduct experiments for the DRO. For illustrative purpose, we only solve the first mean-variance model. The experiments for the other models can be conducted accordingly. We consider Problem (\ref{MVportfolio}) with the set $X=\left\{x: {\bf 1}^{\rm T} x= 1\right\}$ and the number of assets $k=50$. We first assume that $\xi$ follows a multivariate normal distribution $N(\mu, \Sigma)$. For the parameters of $N(\mu, \Sigma)$, we adopt a configuration in Hong et al. (2014). The elements of $\mu=\left(\mu_1,\cdots,\mu_k\right)^{\rm T} $ evenly spread
between 0.04 and 0.50 and increase with the subscript. The standard deviation $\mbox{std}\left[\xi_i\right]$ is equal to $\mu_i+0.05$ for $i=1,\cdots,k$ and the correlation between any two elements $\xi^i$ and $\xi^j$ of $\xi$ is 0.35 where $i \not=j$. The true optimal solution and optimal value can be derived analytically based on the discussion in Section \ref{sec:MV}. They are used as the benchmark in the experiments. The sample approximation Problem (\ref{MVsampleportfolio}) is solved by the CVX package (Grant and Boyd 2020). 

In Table \ref{wscnum1}, we report the optimal value of Problem (\ref{MVportfolio}) and five typical replications of the sample optimal value for different sample sizes. We can see that the sample optimal value gradually tends to the true value as $n$ increases. For small sample sizes, the sample errors are relatively large. Especially, the results exhibit a negative bias to the true value. This is consistent to the fact that the SAA minimization yields negative bias.      

\begin{table}[htbp]
\centering\small \caption{\label{tab:PO}Mean-Variance Optimization with Normal Distribution}{
\begin{tabular}{ccccc}%\\
\hline  %& $r=0.1$ & $r=0.2$ & $r=0.3$ & $r=0.4$  \\
        & $n=10^2$ & $n=10^3$ & $n=10^4$ & $n=10^5$  \\
\hline  True Opt & 0.0328 & 0.0328  & 0.0328 & 0.0328\\
        1 & 0.0112 & 0.0279  & 0.0326  & 0.0329\\
        2 & 0.0150  & 0.0282  & 0.0323  & 0.0329\\
        3 & 0.0116 & 0.0325  & 0.0334  & 0.0327\\
        4 & 0.0105 & 0.0313  & 0.0325  & 0.0330\\
        5 & 0.0121 & 0.0302  & 0.0342  & 0.0328\\
\hline
\end{tabular}}\label{wscnum1}
\end{table}

We further test the convergence speed of the sample optimal value. Let $S_n=v_n-v^*$ and $S=\sqrt{n}(v_n-v^*)$. We simulate $S_n$ and $S$ for $1000$ times for
different $n$,  and plot the densities of $S_n$ and $S$ in Figure \ref{Fig:test1estimator}. The left panel of the figure shows the density and trend of $S_n$. When $n$ becomes larger, $S_n$ gradually concentrates around $0$. The right panel of Figure \ref{Fig:test1estimator} shows the density of $S$. The density appears to admit an asymptotic normality behavior when $n$ is sufficiently large. These observations support the result built in Theorem
\ref{thm:asyoptval}.

\begin{figure}[htp]
\centering \resizebox{8cm}{5cm}{
\includegraphics[scale=.35]{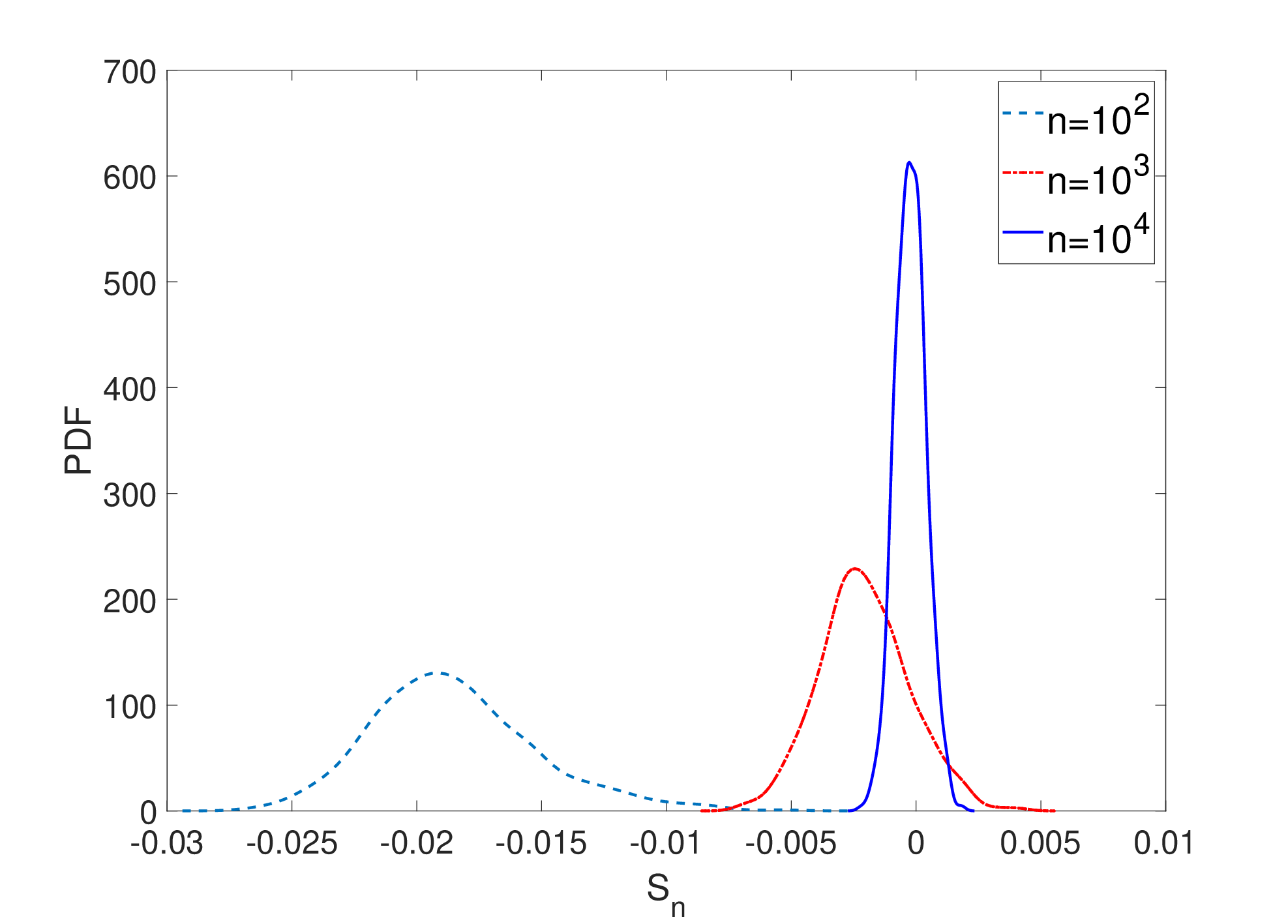}}
\hspace{-0.2cm} \resizebox{8cm}{5cm}{\includegraphics[scale=.35]{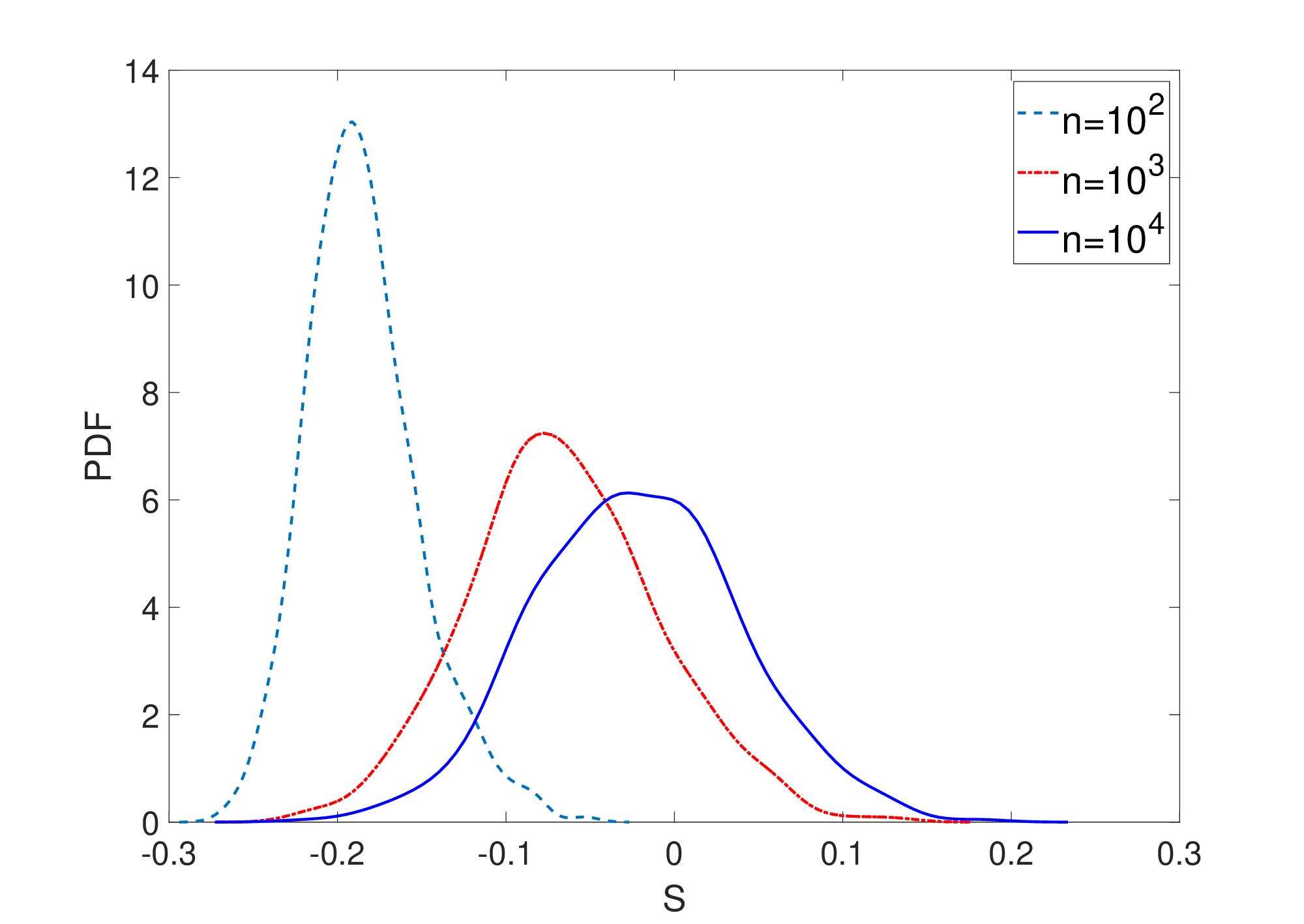}}
\caption{Performance of Sample Approximation with Normal Distribution}\label{Fig:test1estimator}
\end{figure}

We also test the effects of the regularization. Consider Problem (\ref{MVportdetregularcon}) with the set $X=\left\{x: x\ge 0, {\bf 1}^{\rm T} x\le 1\right\}$, and set $\rho(x)=\|x\|_1$ where $\|\cdot\|_1$ denotes the $L_1$ norm. The minimum of $\|x\|_1$ over all feasible $x$ is $0.5094$. We consider $\rho_0=0.55, 0.80$, and solve  Problem (\ref{MVportdetregularcon}) to obtain $v^*$ for the two settings. We simulate $S$ defined above for $1000$ times with $n=10^4$ and plot the densities of $S$ in Figure \ref{Fig:test1estimator_Reg}. From the figure, we observe that with regularization, the sample optimal value still admit an asymptotic
normality behavior.        

\begin{figure}[htp]
\centering \resizebox{8cm}{5cm}{
\includegraphics[scale=.35]{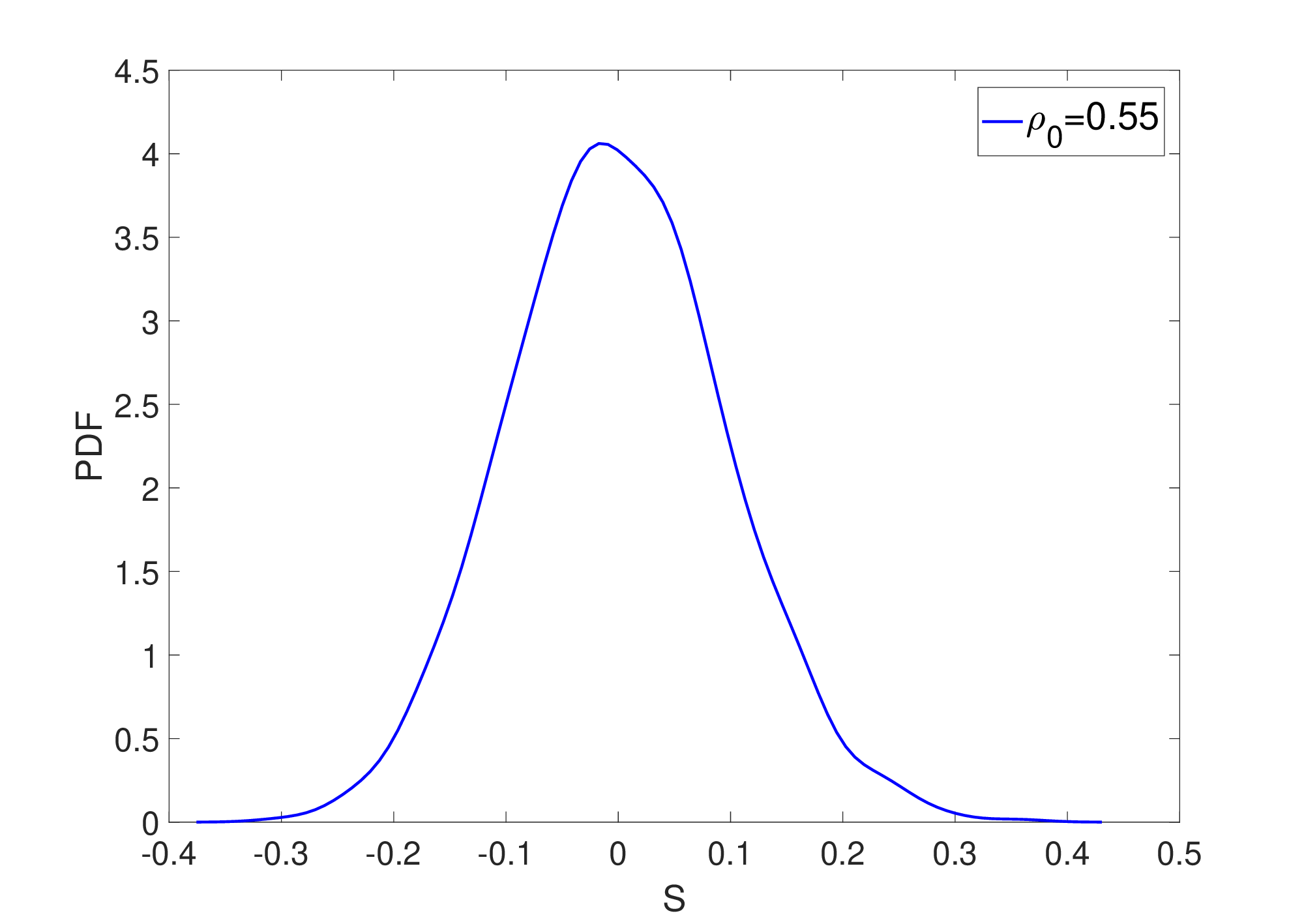}}
\hspace{-0.2cm} \resizebox{8cm}{5cm}{\includegraphics[scale=.35]{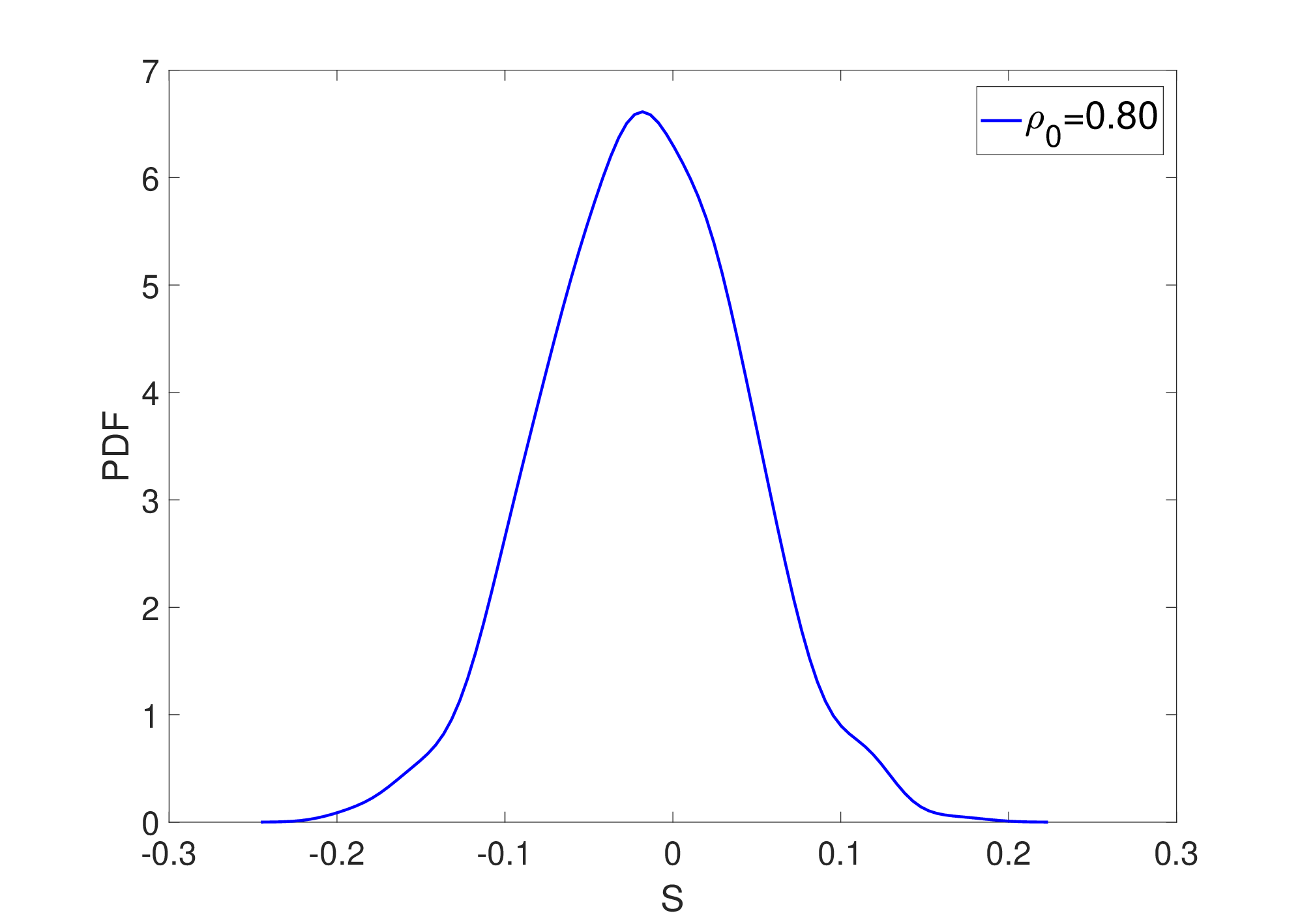}}
\caption{Performance of Sample Approximation with Normal Distribution with Regularization}\label{Fig:test1estimator_Reg}
\end{figure}

Now we consider a distribution with a heavier tail. Let $R=\mbox{Chol}(\Sigma)$ denote the Cholesky decomposition of $\Sigma$. Then $\Sigma=R^{\rm T}R$. We assume that 
$\xi=R^{\rm T} \left(\sqrt{\frac{\nu-2}{\nu}}\eta \right)+\mu$
where $\eta$ is a $k$-dimensional random vector with each element following a $t$ distribution with degree of freedom $\nu$ and all elements being independent. We can compute that the mean vector and the covariance matrix of $\xi$ are still $\mu$ and $\Sigma$ provided that $\nu>2$. Therefore, the true optimization problem remains the same. Recall that for a $t$ distribution with degree of freedom $\nu$, if $m>\nu$, the $m$-th moment of the $t$ distribution does not exist. In the experiment, we consider $\nu=5$ and $\nu=3$. For $\nu=5$, the fourth moment of $\eta$ exists. However, when $\nu=3$, the fourth moment of $\eta$ does not exist and in this setting the asymptotic variance in (\ref{CLTVvmin}) may not exist. We conduct experiments and examine it. The other settings are the same as above.  

Table \ref{wscnum2} summarizes five typical simulation replications for the sample optimal values for the two degrees of freedom. The results essentially shows a consistency behavior. Comparing the two parts of the table, we observe that the results for $\nu=3$ appear to have relatively larger variability than that for $\nu=5$ when $n=10^5$.        
\begin{table}[htbp]
\centering\small \caption{\label{tab:PO}Mean-Variance Optimization with $t$ Distribution}{
\begin{tabular}{ccccc||ccccc}%\\
\hline  
     $\nu=5$   & $n=10^2$ & $n=10^3$ & $n=10^4$ & $n=10^5$    &  $\nu=3$    & $n=10^2$ & $n=10^3$ & $n=10^4$ & $n=10^5$   \\
\hline  True Opt & 0.0328 & 0.0328  & 0.0328 & 0.0328 & True Opt & 0.0328 & 0.0328  & 0.0328 & 0.0328\\
        1 & 0.0148 & 0.0336  & 0.0329  & 0.0326 & 1 & 0.0103 & 0.0297  & 0.0318  & 0.0319\\
        2 & 0.0125 & 0.0299  & 0.0326  & 0.0329 & 2 & 0.0107 & 0.0348  & 0.0322  & 0.0317 \\
        3 & 0.0151 & 0.0338  & 0.0315  & 0.0332 &  3 & 0.0136 & 0.0284  & 0.0321  & 0.0324 \\
        4 & 0.0132 & 0.0299  & 0.0328  & 0.0327 & 4 & 0.0098 & 0.0279  & 0.0324  & 0.0322 \\
        5 & 0.0090 & 0.0289  & 0.0328  & 0.0325 & 5 & 0.0199 & 0.0298  & 0.0319  & 0.0317\\
\hline
\end{tabular}}\label{wscnum2}
\end{table}

Finally, we test the convergence speed of the sample approximation. We set $n=10^4$ and simulate $S$ for $1000$ times. Based
on the simulation we plot the density of $S$,
which are shown in Figure \ref{Fig:test2estimator} (left panel for $\nu=5$ and right panel for $\nu=3$). From the figure, we can see that when $\nu=5$, $S$ admits an asymptotic normality behavior. However, when $\nu=3$, the density exhibits non-symmetry and has a long right tail. It appears to have quite different behavior compared to the normal distribution. We will further explore the properties in the future study. 

\begin{figure}[h]
\centering \resizebox{8cm}{5cm}{
\includegraphics[scale=.35]{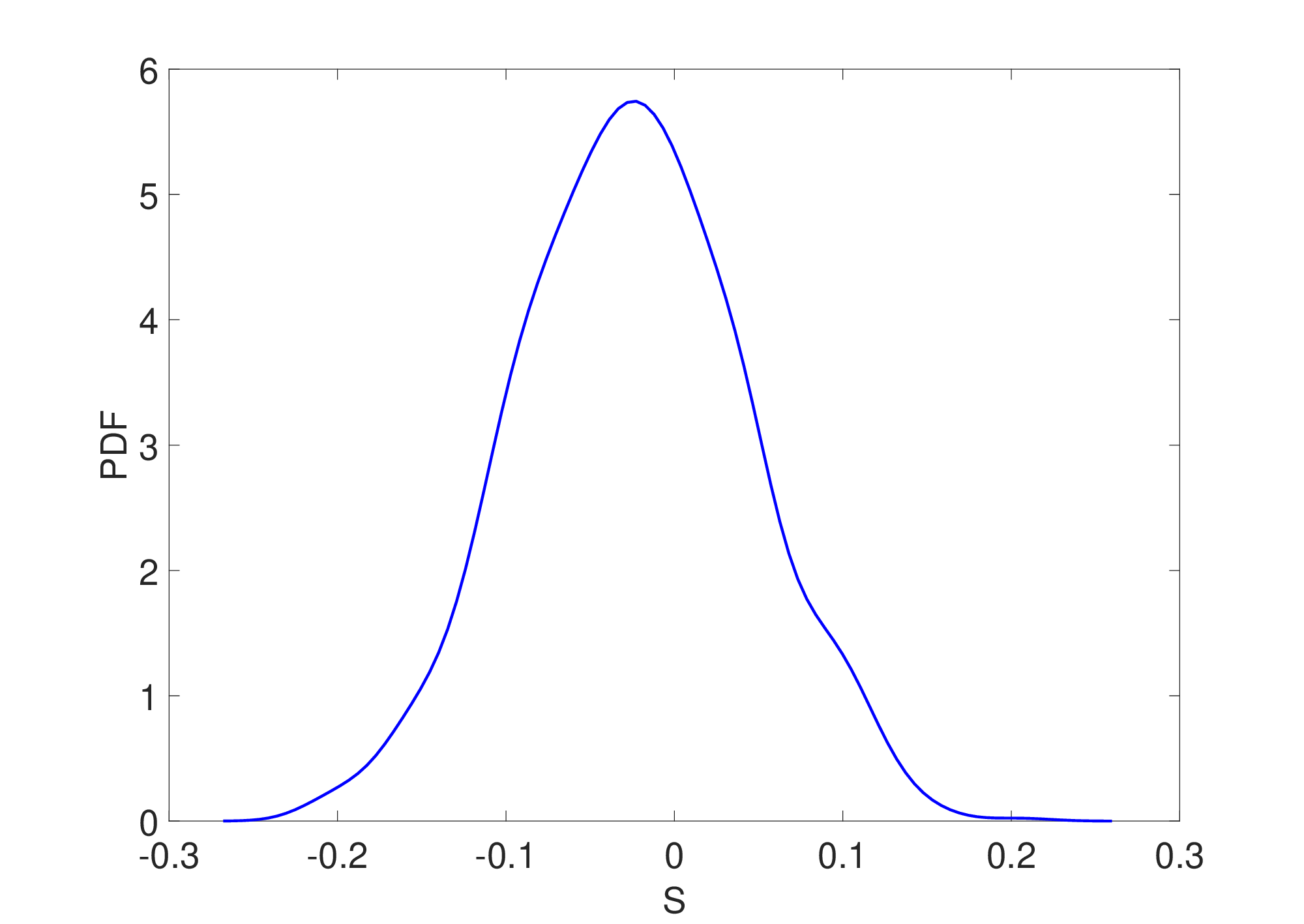}}
\hspace{-0.2cm} \resizebox{8cm}{5cm}{\includegraphics[scale=.35]{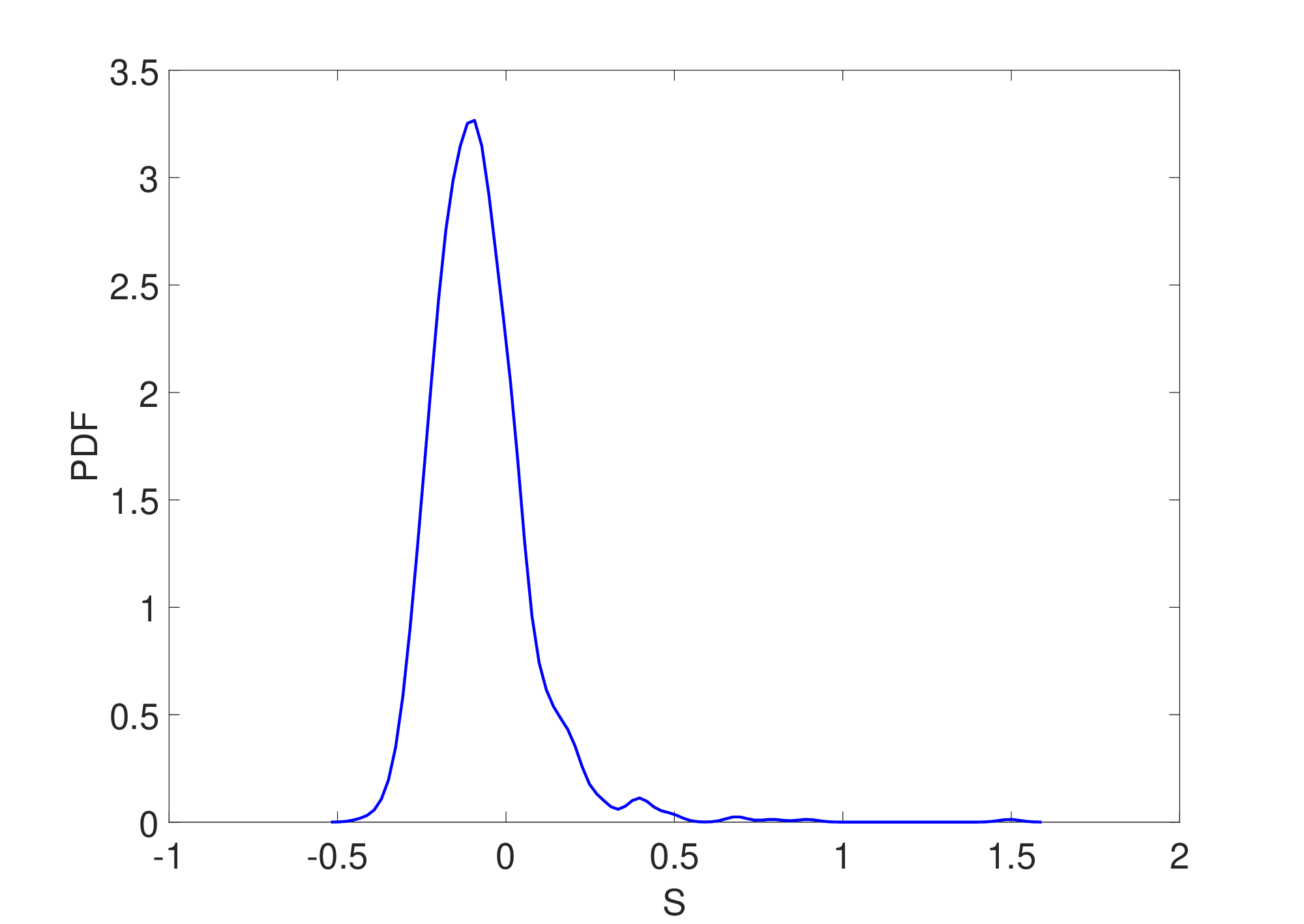}}
\caption{Performance of Sample Approximation with $t$ Distribution}\label{Fig:test2estimator}
\end{figure}

\section{Conclusion}\label{sec:con}

In this paper, we have investigated the statistical properties for various mean-variance portfolio optimization models when using sample approximations. We have made an attempt to provide some theoretical analysis to the robustness and variability of the portfolio optimization model. Essentially, our investigation shows that the sample approximation of the mean-variance model typically has a convergence rate of the square root of the sample size, which is the conventional convergence rate of the Monte Carlo simulation, and the asymptotic variance depends on up to the fourth moment. The results shed some light on why the empirical performance of the mean-variance might exhibit high variation. They may also help quantify the sample error and build confidence region for the portfolio performance in the data driven environment. In this paper, we have also studied the mean-variance DRO models with ambiguity sets constructed by the $\phi$-divergence and the Wasserstein distance, built tractable reformulations and linked the results to the literature. The study shows these mean-variance DRO models can be addressed under a unified expectation-based stochastic optimization framework.

\section*{Acknowledgements}

This paper was previously uploaded to ResearchGate in October 2025 (\url{https://www.researchgate.net/publication/396442050_Mean-Variance_Portfolio_Selection_Statistical_Properties_and_Distributionally_Robust_Optimization}). The work was also presented on The First Youth Academic Conference of the Operations Research Society of China in August 2025.

\section*{References}

\begin{hangref}

\item Ban, G.-Y., N. El Karoui, A. E. B. Lim. 2018. Machine learning and portfolio optimization,
{\it Management Science}, {\bf 64}(3) 1136-1154.

\item Ben-Tal, A., D. den Hertog, A. M. B. de Waegenaere, B. Melenberg, G.
Rennen. 2013. Robust solutions of optimization problems affected by
uncertain probabilities. {\it Management Science}, {\bf 59}(2)
341-357.

\item Blanchet, J., L. Chen, X. Y. Zhou. 2022. Distributionally robust mean-variance portfolio selection with Wasserstein distances. {\it Management Science}, {\bf68}(9) 6382-6410.

\item Blanchet, J., Y. Kang, K. Murthy. 2019. Robust Wasserstein profile inference and applications to machine learning. {\it Journal of Applied Probability}, {\bf 56}(3) 830-857.

\item Britten-Jones, M. 1999. The sampling error in estimates of mean-variance efficient portfolio weights. {\it The Journal of Finance}, {\bf 54}(2) 655-671.
    
\item Burke, J. 2020. Markowitz mean-variance portfolio theory. Available at \url{http://sites.math.washington.edu/~burke/crs/408/notes/fin/mark1.pdf}.  

\item Boyd, S., L. Vandenberghe. 2004. {\it Convex Optimization},
Cambridge University Press, Cambridge, UK.

\item Brodie, J., I. Daubechies, C. De Mol, D. Giannone, I. Loris. 2009. Sparse and stable Markowitz portfolios. {\it Proceedings of the National Academy of Sciences}, {\bf106}(30) 12267-12272.

\item DeMiguel, V., L. Garlappi, F. J. Nogales, R. Uppal. 2009a. A generalized approach to portfolio optimization: Improving performance by constraining portfolio norms. {\it Management Science}, {\bf55}(5) 798-812.

\item DeMiguel, V., L. Garlappi, R. Uppal. 2009b. Optimal versus naive diversification: How inefficient is the 1/N portfolio strategy? {\it The Review of Financial Studies}, {\bf22}(5) 1915-1953.

\item Grant, M., S. Boyd. 2020. CVX: Matlab software for disciplined convex programming. 

\item Hong, L. J., Z. Hu, L. Zhang. 2014. Conditional value-at-risk approximation to
value-at-risk constrained programs: A remedy via Monte Carlo. {\it
INFORMS Journal on Computing}, {\bf 26}(2) 385-400.

\item Hu, Z., L. J. Hong. 2012. Kullback-Leibler divergence constrained
distributionally robust optimization. {\it Optimization Online}.
\url{http://www.optimization-online.org/DB_FILE/2012/11/3677.pdf}.

\item Hu, Z., L. J. Hong. 2022. Robust simulation with likelihood-ratio constrained input uncertainty. {\it INFORMS Journal
	on Computing}, {\bf 34}(4) 2350-2367.

\item Hu, Z., D. Zhang. 2018. Utility‐based shortfall risk: Efficient computations via Monte Carlo. {\it Naval Research Logistics}, {\bf 65}(5) 378-392.

\item Huber, P. J. 1964. Robust estimation of a location parameter. {\it The Annals of Mathematical Statistics}, {\bf 35}(1) 73-101.  

\item Jagannathan, R., T. Ma. 2003. Risk reduction in large portfolios: Why imposing the wrong constraints helps. {\it The Journal of Finance}, {\bf 58} 1651-1684.

\item Kuhn, D., S. Shafiee, W. Wiesemann. 2025. Distributionally robust optimization. {\it Acta Numerica}, {\bf 34} 579-804.

\item Lim, A. E., J. G. Shanthikumar, G. Y. Vahn. 2011. Conditional value at risk in portfolio optimization: Coherent but fragile. {\it Operations Research Letters}, {\bf39}(3) 163-171.

\item Markowitz H. 1952. Portfolio selection. {\it The Journal of Finance}, {\bf 7}(1):77-91.

\item Pardo, L. 2006. {\it Statistical Inference Based on Divergence
Measures}, Chapman \& Hall/CRC, Boca Raton, Florida.

\item Shapiro, A., D. Dentcheva, A. Ruszczy\'nski. 2014. {\it Lectures on
Stochastic Programming: Modeling and Theory}, Second Edition. SIAM,
Philadelphia.

\item Wang, S., G. Cai, P. Yu, G. Liu, J. Luo. 2023. Mean-variance portfolio optimization with nonlinear derivative securities. {\it Proceedings
of the 2023 Winter Simulation Conference}, 576-587.

\end{hangref}

\end{document}